\newtheorem{thm}{Theorem}[section]
\newtheorem{cor}[thm]{Corollary}
\newtheorem{lem}[thm]{Lemma}
\newtheorem{prop}[thm]{Proposition}
\newtheorem{quest}[thm]{Question}
\theoremstyle{definition}
\DeclareMathOperator{\Aut}{Aut}
\DeclareMathOperator{\Stab}{Stab}
\DeclareMathOperator{\diag}{diag}
\DeclareMathOperator{\Ric}{Ric}
\DeclareMathOperator{\SL}{SL}
\DeclareMathOperator{\Sp}{Sp}
\DeclareMathOperator{\SO}{SO}
\DeclareMathOperator{\GO}{O}
\DeclareMathOperator{\GL}{GL}
\DeclareMathOperator{\PGL}{PGL}
\DeclareMathOperator{\barDH}{Bar}
\DeclareMathOperator{\Cone}{Cone}
\DeclareMathOperator{\Bl}{Bl}
\newcommand{\dint}{\mathrm{d}}
\newcommand\killing[2]{\left\{ #1, #2 \right\}}
\newcommand\blPPP[2]{X_{#1, #2}}  
\newcommand\blQQ[2]{Y_{#1, #2}}
\newcommand{\bbC}{\mathbb{C}}
\newcommand{\bbR}{\mathbb{R}}
\newcommand{\bbQ}{\mathbb{Q}}
\newcommand{\bbZ}{\mathbb{Z}}
\newcommand{\bbN}{\mathbb{N}}
\newcommand{\bbP}{\mathbb{P}}
\title{Examples of K-unstable Fano manifolds}
\author{Thibaut Delcroix}
\email{thibaut.delcroix@umontpellier.fr}
\urladdr{http://delcroix.perso.math.cnrs.fr/}
\date{2019}
\begin{document}

\begin{abstract}
We examine various examples of horosymmetric manifolds which exhibit interesting properties with respect to canonical metrics. 
In particular, we determine when the blow-up of a quadric along a linear subquadric admits Kähler-Einstein metrics, providing infinitely many examples of manifolds with no Kähler-Ricci solitons that are not K-semistable. 
Using a different construction, we provide an infinite family of Fano manifolds with no Kähler-Einstein metrics but which admit coupled Kähler-Einstein metrics. 
Finally, we elaborate on the relationship between Kähler-Ricci solitons and the more general concept of multiplier Hermitian structures and illustrate this with examples related to the two previous families.
\end{abstract}

\maketitle

\section{Introduction}

When does a Fano manifold admits a Kähler-Einstein metric? 
If it does not, how can we measure this failure?
What alternative canonical metric can we obtain on a Fano manifold?
The goal of this article is not to answer these questions but to give them more visibility and provide a detailed study of a few examples. 
By studying the blow-ups of projective spaces at two disjoint linear subspaces, we provide an infinite family of examples of manifolds with no Kähler-Einstein metrics but coupled Kähler-Einstein metrics. 
By studying the blow-ups of quadrics at a linear subquadric, we provide an infinite family of examples of manifolds that are not K-semistable and admit no Kähler-Ricci solitons. 

Let us be a bit more precise. Consider a Fano manifold $X$. 
A Kähler form (abusively, metric) $\omega\in c_1(X)$ is Kähler-Einstein if it satisfies the equation 
\[ \Ric(\omega)=\omega \] 
where $\Ric(\omega)$ denotes the Ricci form of $\omega$. 
We say a Fano manifold is Kähler-Einstein if it admits a Kähler-Einstein form. 
A convincing numerical measure of how a Fano manifold fails to be Kähler-Einstein is given by the \emph{greatest Ricci lower bound}:
\[ R(X)=\sup\left\{t\in[0,1] \mid \exists \omega\in c_1(X), \Ric(\omega)\geq t\omega \right\}. \]  
If $X$ is Kähler-Einstein then $R(X)=1$ while the converse direction does not hold. 
In K-stability terms, $R(X)=1$ is equivalent to K-semistability of $X$. 

Several candidate alternative metrics on Fano manifolds with no Kähler-Einstein metrics have been proposed over the years. 
The best known are the Kähler-Ricci solitons, which satisfy the equation 
\[ \Ric(\omega)-L_{\xi}\omega=\omega \] 
where $\xi$ is a holomorphic vector field and $L_{\xi}$ denotes the Lie derivative with respect to $\xi$. 
They actually belong to a large family of metrics introduced by Mabuchi as \emph{multiplier Hermitian structures}: $X$ is equipped with a multiplier Hermitian structure if there exists a Kähler form $\omega\in c_1(X)$, a holomorphic vector field $\xi$ and a smooth real valued concave function $h$ such that 
\begin{equation}
\label{eqn_mhs}
\Ric(\omega) - \sqrt{-1}\partial \bar{\partial} h\circ \theta = \omega 
\end{equation}
where $\theta$ is defined by $L_{\xi}\omega=\sqrt{-1}\partial \bar{\partial} \theta$. 
Kähler-Einstein metrics correspond to the case $g=0$ and Kähler-Ricci solitons to the case $g=\text{id}$. 
When $g$ is the logarithm of an affine function, such metrics are called Mabuchi metrics. 
The advantage of Mabuchi metrics over Kähler-Ricci solitons is that such metrics have a more algebraic nature. 
For example, the holomorphic vector field is essentially well determined by the manifold alone in the case of Kähler-Ricci solitons or of Mabuchi metrics. 
In the case of solitons, the vector field may very well generate a non-closed subgroup whereas this is impossible for Mabuchi metrics. 
This can make an important difference in the study of moduli spaces of Fano manifolds with canonical metrics. 
Motivated by this, we want to advertise the following open question. 
\begin{quest}
\label{ques_pol}
Is it true that a Fano manifold $X$ admits a Kähler-Ricci soliton if and only if there exists a multiplier Hermitian structure as defined by Equation~\eqref{eqn_mhs}, where $h$ is the logarithm of a polynomial?
\end{quest}

Another candidate for canonical metrics is given by coupled Kähler-Einstein metrics.
These were introduced by Hultgren and Witt Nyström in \cite{HWN17} and in the simplest case of pairs of metrics, may be defined as follows. 
A Fano manifold $X$ admits a pair of coupled Kähler-Einstein metrics if there exists a pair $(\omega_1,\omega_2)$ of Kähler forms such that 
\[ \Ric(\omega_1)=\Ric(\omega_2)=\omega_1+\omega_2. \]
The advantage here is that no holomorphic vector field is involved, and the drawback is that several Kähler classes have to be considered. 
Until the present paper, only two examples of Fano manifolds with no Kähler-Einstein metrics but a pair of coupled Kähler-Einstein metrics were known \cite{Hul,DH}. 

The first example of a Fano manifold $X$ with no Kähler-Ricci solitons and $R(X)<1$ was obtained by the author in \cite{DelTh,DelKE}.  
The example is a biequivariant compactification of the group $\Sp_4$, obtained by blowing up the unique closed orbit in the wonderful compactification of this group. 
The author later provided two additional examples in \cite{DelKSSV}, as compactifications of the group $\SO_4$, however the examples were still sporadic at that point. 
Such examples provide a useful illustration of the variety of situations regarding K-stability, and are the only known illustrations of type 2 singularity formation along the Kähler-Ricci flow on Fano manifolds (see \cite{LTZ}). 
It should be noted that Fujita obtained in \cite{Fuj17} examples of Fano manifolds of Picard number one which are K-unstable. To the author's knowledge, it is not known if these examples admit Kähler-Ricci solitons. 

The infinite family of examples presented here share a strong similarity with the previous examples: they are equivariant compactifications of symmetric spaces (actually one of the above mentioned compactifications of $\SO_4$ is part of this family). 
This allows us to use the K-stability criterions proved by the author in \cite{DelKSSV}, which reduces the problem to determining the sign of a finite number of integrals of polynomials on polytopes. 
While such integrals can be computed, it yields in general complicated expressions. 
In particular, the sign is not easily determined without an explicit computation, making the determination of K-stability on infinite families a challenging task. 

The family of examples we consider here is formed as we said by blow-ups of quadrics along lower dimensional linear subquadrics. 
An infinity of these are Kähler-Einstein, and an infinity are as stated (not K-semistable and with no Kähler-Ricci solitons). 
\begin{thm}
\label{thm_blQ}
Let $Q^n$ denote the quadric of dimension $n$, and $Q^k$ denote a linear subquadric of $Q^n$ of dimension $k$. 
Then if $2\leq k \leq n-3$, the blow-up $\Bl_{Q^k}(Q^n)$ of $Q^n$ along $Q^k$ is K-unstable and does not admit any Kähler-Ricci soliton. 
In the other cases, $\Bl_{Q^k}(Q^n)$ is Kähler-Einstein. 
\end{thm}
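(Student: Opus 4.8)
The plan is to realize $X=\Bl_{Q^k}(Q^n)$ as a horosymmetric Fano manifold and to run it through the combinatorial K-stability and Kähler--Ricci soliton criteria of \cite{DelKSSV}. Fix a non-degenerate quadratic form $q$ on $\bbC^{n+2}$ together with an orthogonal decomposition $\bbC^{n+2}=W\oplus W^{\perp}$ with $\dim W=k+2$ and $q|_{W}$ non-degenerate; then $H=S(\GO(W)\times\GO(W^{\perp}))\subset\SO_{n+2}$ preserves $Q^{n}=\{[v]\,:\,q(v)=0\}$ together with the linear subquadric $Q^{k}=\bbP(W)\cap Q^{n}$, hence acts on $X$. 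Writing $v=v_{W}+v_{W^{\perp}}$, a short stabilizer computation identifies the open $H$-orbit in $X$ (the blow-up locus $Q^{k}$ lying in its complement) with the set of isotropic lines for which $v_{W}$ and $v_{W^{\perp}}$ are both non-zero and non-isotropic; this is the symmetric space $(\SO_{k+2}/\SO_{k+1})\times(\SO_{n-k}/\SO_{n-k-1})$, a product of two complexified spheres of rank one, so $X$ is a smooth equivariant compactification of a rank-two symmetric space. I would then record the attached combinatorial data: the two-dimensional valuation space, the two positive restricted roots with multiplicities $k$ and $n-k-2$, the density $P$ --- a product of affine-linear forms indexed by the positive restricted roots --- entering the Duistermaat--Heckman-type measure, the point $2\rho_{P}$ built from the root multiplicities and $-K_{X}$ which plays the role of the barycentric target, and the moment polytope $\Delta^{+}$ of $-K_{X}$, a polygon whose facets correspond to the $H$-stable prime divisors of $X$ --- the exceptional divisor $E$ and the strict transform of $\{[v]\in Q^{n}:q(v_{W})=0\}$ --- together with the colors. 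Pinning down $\Delta^{+}$ and $2\rho_{P}$ explicitly as functions of $(n,k)$ is the first substantive task.

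With this data in hand, the criteria of \cite{DelKSSV} express everything through barycenters. $X$ is Kähler--Einstein exactly when $\barDH(\Delta^{+},P\,dx)=2\rho_{P}$; more generally K-semistability is governed by the position of $\barDH(\Delta^{+},P\,dx)$ relative to $2\rho_{P}$ and to the facets of $\Delta^{+}$, and in particular if $2\rho_{P}$ lies strictly outside $\Delta^{+}$ then the $H$-stable divisor whose defining facet inequality is violated provides a destabilizing test configuration, so $X$ is K-unstable. Finally, a necessary condition for a Kähler--Ricci soliton is that the weighted barycenter $\barDH(\Delta^{+},e^{\langle\xi,\,\cdot\,\rangle}P\,dx)$ equal $2\rho_{P}$ for some $\xi$ in the valuation space. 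The key elementary observation is that every weighted barycenter lies in the relative interior of $\Delta^{+}$; hence if $2\rho_{P}\notin\Delta^{+}$ there is no Kähler--Ricci soliton either (and a fortiori no Kähler--Einstein metric). So the heart of the unstable range is to show that $2\rho_{P}\notin\Delta^{+}$ precisely when $2\le k\le n-3$ --- equivalently that for such $(n,k)$ there is a facet normal $\nu$ of $\Delta^{+}$ with $\langle\nu,2\rho_{P}\rangle$ on the wrong side of $\max_{\Delta^{+}}\langle\nu,\,\cdot\,\rangle$ --- while for the remaining values of $k$ one shows $2\rho_{P}\in\operatorname{relint}\Delta^{+}$ and, by computing the relevant integrals, that $\barDH(\Delta^{+},P\,dx)=2\rho_{P}$, yielding the Kähler--Einstein property.

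Carrying this out, $\Delta^{+}$ is a small polygon whose vertices are simple rational expressions in $n$ and $k$, and $P$ is a product of a few affine-linear forms with exponents of order $k$ and $n-k$; the quantities $\max_{\Delta^{+}}\langle\nu,\,\cdot\,\rangle-\langle\nu,2\rho_{P}\rangle$ and, where needed, the coordinates of $\barDH(\Delta^{+},P\,dx)$ evaluate --- via iterated integration and Beta-integral identities --- to explicit rational functions of $n$ and $k$. The main obstacle is the one the introduction already flags: these two-variable rational expressions are unwieldy, and reading off their sign uniformly over the integer range $2\le k\le n-3$ is not routine. I expect the crux to be a judicious choice of the test direction $\nu$ --- most plausibly the normal to the facet coming from $E$, or from the strict transform of $\{q(v_{W})=0\}$ --- for which the sign-deciding quantity factors, or reduces to a one-parameter inequality (e.g.\ monotone in $k$ for each fixed $n$); the appearance of the threshold $k=2$ suggests a factor that vanishes there. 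The boundary cases then need only a brief separate treatment: $k=n-1$ gives $X\cong Q^{n}$, homogeneous hence Kähler--Einstein, while for $k\in\{0,1\}$ and $k=n-2$ the symmetric space acquires a simplifying structure (a torus factor when $k\in\{0,n-2\}$), under which the same integral computation yields $\barDH(\Delta^{+},P\,dx)=2\rho_{P}$.
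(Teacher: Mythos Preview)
Your overall setup---realizing $X$ as a rank-two symmetric variety under $G=\SO_{k+2}\times\SO_{n-k}$ and appealing to the criteria of \cite{DelKSSV}---is the paper's. But the mechanism you propose for the unstable range rests on two mistaken premises. First, for genuine symmetric (as opposed to horospherical or toric) varieties the K\"ahler--Einstein criterion of \cite{DelKSSV} is \emph{not} $\barDH(\Delta^+)=2\rho_P$; rather, $\barDH(\Delta^+)$ must lie in $2\rho_P$ plus the open cone spanned by the positive restricted roots. The relevant obstruction is therefore the position of the barycenter relative to that translated cone, not whether $2\rho_P$ itself lies in $\Delta^+$. Second, and decisively, $2\rho_P$ \emph{does} lie in the interior of $\Delta^+$ for every admissible $(n,k)$: with $m_1=k$ and $m_2=n-k-2$ the (rescaled) anticanonical polytope is $\{0\le x\le m_1+1,\ 0\le y,\ x+y\le m_1+m_2+2\}$ and $2\rho_P=(m_1,m_2)$ is visibly interior. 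Hence your proposed facet-violation test configuration does not exist, and the (true) observation that weighted barycenters stay inside $\Delta^+$ is vacuous here.

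The paper's argument is different on both counts. K-instability comes from showing that the barycenter falls strictly outside the translated cone in the $\alpha$-direction: one proves
\[
\int_0^{m_1+1}\!\!\int_0^{m_1+m_2+2-x}(x-m_1)\,x^{m_1}y^{m_2}\,dy\,dx<0\qquad\text{for }m_1\ge 2
\]
via a Beta-integral expansion, and this sign is precisely what violates K-semistability in \cite{DelKSSV}. The absence of K\"ahler--Ricci solitons is not a weighted-barycenter computation at all: in the range $2\le k\le n-3$ the group $G=\SO_{k+2}\times\SO_{n-k}$ is semisimple and is (up to isogeny) the full connected automorphism group of $X$, so the soliton vector field---which must lie in the center of the Lie algebra of $\Aut^0(X)$---is forced to vanish. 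Your formulation lets $\xi$ roam over the full two-dimensional valuation space, but those directions are not induced by holomorphic vector fields on $X$; only the (here trivial) center is. Any soliton would therefore already be K\"ahler--Einstein, which the integral inequality rules out. (A minor aside: the open orbit is not literally a product of spheres---the isotropy is a $\bbZ/2$-extension of $\SO_{k+1}\times\SO_{n-k-1}$---though this does not affect the restricted root data.)
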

The lowest dimensional example is $\Bl_{Q^2}(Q^5)$. It has dimension five, so the following question remains open. 
\begin{quest}
Does there exists examples of Fano threefolds with no Kähler-Ricci solitons that are K-unstable? What about Fano fourfolds?
\end{quest}

In the process of introducing the symmetric structure on the family of manifolds that provide our examples, a natural detour is to start by considering the blow-ups of projective spaces along linear subspaces. 
The blow-up of a projective space along a linear subspace has a non-reductive automorphism group, satisfying Matsushima's obstruction \cite{Mat57}. 
They provided as such the first example of a Fano manifold with no Kähler-Einstein metrics: the blow-up of $\bbP^2$ at a point. 

Furthermore, the blow-up of $\bbP^4$ along a line and a disjoint plane provided the first example of a Fano manifold with reductive automorphism group and non-vanishing Futaki invariant as highlighted by Futaki himself \cite{Fut83}. 
As a toric manifold, $\Bl_{\bbP^1,\bbP^2}\bbP^4$ admits a Kähler-Ricci solitons: Wang and Zhu proved in \cite{WZ04} that any toric Fano manifold admits a Kähler-Ricci soliton. 
It actually admits better metrics in two directions: it admits Mabuchi metrics and coupled Kähler-Einstein metrics. 

More generally, the blow-up of a projective space along two complementary linear subspaces also has reductive automorphism group and non-vanishing Futaki invariant. 
We show that an infinity of these manifolds are manifolds with no Kähler-Einstein metrics but pairs of coupled Kähler-Einstein metrics. 
\begin{thm}
\label{thm_cKE_BlP}
For $k$ large enough, $\Bl_{\bbP^{k-1},\bbP^k}\bbP^{2k+1}$ admits pairs of coupled Kähler-Einstein metrics but no Kähler-Einstein metrics. 
\end{thm}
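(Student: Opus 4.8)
The plan is to use that $\Bl_{\bbP^{k-1},\bbP^k}\bbP^{2k+1}$ is a toric Fano manifold and to reduce everything to the combinatorics of its polytope. Since any two disjoint linear subspaces of dimensions $k-1$ and $k$ in $\bbP^{2k+1}$ are projectively equivalent, we may take them to be coordinate subspaces; then $X_k:=\Bl_{\bbP^{k-1},\bbP^k}\bbP^{2k+1}$ is toric, obtained from the fan of $\bbP^{2k+1}$ by two star subdivisions, it has Picard rank $3$, and one checks it is Fano. After a linear change of coordinates and introduction of slack variables, its reflexive polytope becomes
\[
\Delta_k\ \cong\ \Bigl\{\,p\in\bbR_{\geq 0}^{2k+2}\ :\ \textstyle\sum_{i=0}^{2k+1}p_i=2k+2,\ \ \sum_{i=0}^{k-1}p_i\leq k+1,\ \ \sum_{i=k+1}^{2k+1}p_i\leq k+2\,\Bigr\},
\]
and the symmetry group $S_k\times S_{k+1}$ acts by permuting $p_0,\dots,p_{k-1}$ and $p_{k+1},\dots,p_{2k+1}$, fixing $p_k$. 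The key reduction is that, grouping coordinates into the blocks of sizes $k,1,k+1$ and setting $U=\sum_{i<k}p_i$, $V=p_k$, $W=\sum_{i>k}p_i$, the fibres of $p\mapsto(U,V,W)$ are products of simplices; hence the volume and barycenter of $\Delta_k$, and of the moment polytope of any $S_k\times S_{k+1}$-invariant line bundle, are computed from two-dimensional integrals $\iint_{\mathcal P}U^{a}W^{b}\,dU\,dW$ over an explicit polygon $\mathcal P$ depending on $k$.

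For the absence of Kähler-Einstein metrics I would use the classical criterion (Wang--Zhu) that a toric Fano manifold is Kähler-Einstein if and only if the barycenter of its reflexive polytope is the origin, equivalently its Futaki invariant vanishes. After the reduction above, the component of $\mathrm{bar}(\Delta_k)$ in the direction of the distinguished coordinate $p_k$ is a ratio of polygon integrals with weight $U^{k-1}W^{k}$. The polygon $\mathcal P$ is a rectangle with one corner truncated by the facet $U+W\leq 2k+2$, and $U^{k-1}W^{k}$ concentrates, on the scale $O(1)$, exactly at that truncated corner; a Laplace/corner analysis then shows this barycenter coordinate converges as $k\to\infty$ to an explicit constant, different from the value it takes at the center of $\Delta_k$. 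Hence the Futaki invariant stays bounded away from zero and $X_k$ admits no Kähler-Einstein metric once $k$ is large.

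For the existence of coupled Kähler-Einstein metrics I would invoke the criterion of Delcroix--Hultgren \cite{DH} (see also \cite{HWN17}): given a decomposition $-K_{X_k}=L_1+L_2$ into ample classes, with moment polytopes $P_1,P_2$ normalised so that $P_1+P_2=\Delta_k$, a pair of coupled Kähler-Einstein metrics exists if and only if a suitable balancing condition on the barycenters of $P_1$ and $P_2$ holds. I would restrict to $S_k\times S_{k+1}$-invariant decompositions: these form a three-parameter family cut out by the ampleness inequalities, and each $P_i$ has the same shape as $\Delta_k$ with the constants $k+1,k+2,2k+2$ replaced by affine functions of the parameters. Applying the same two-dimensional reduction, the balancing condition becomes a system of two equations on the parameters whose limit as $k\to\infty$ is an explicit system involving the corner integrals $\iint_{\{x,y\geq 0,\; x+y\geq\epsilon\}}x\,e^{-\lambda x-\mu y}\,dx\,dy$ and their $y$-analogues. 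I would solve this limiting system---which cuts out a one-parameter family of solutions---pick a solution in the interior of the parameter region (so that $L_1,L_2$ are strictly ample), and then conclude by the implicit function theorem that the exact balancing condition has a solution for all large $k$. Combined with the previous paragraph this proves Theorem~\ref{thm_cKE_BlP}.

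The hard part is the asymptotic analysis in the last two paragraphs. Because $U^{k-1}W^{k}$ concentrates at a single vertex of $\mathcal P$, the leading behaviour of every barycenter is dictated by which facets truncate that vertex, and this depends on the decomposition; one must check that in the relevant regime both $P_1$ and $P_2$ contribute a genuine corner term---neither polytope can be left untruncated---and that the limiting balancing system is genuinely solvable with its solution bounded away from the walls of the ample cone. Ruling out that solutions are forced onto the boundary of the parameter region, and pinning down the limiting constants precisely (here and for $\mathrm{bar}(\Delta_k)$ itself), is the core of the argument.
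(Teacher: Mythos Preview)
Your toric approach is genuinely different from the paper's and in principle workable, but it misses the structure that makes the problem tractable and leaves the decisive steps undone.

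The paper uses that $X_k$ is \emph{rank one horospherical} under $G=S(\GL_k\times\GL_{k+1})$: the spherical lattice is one-dimensional, so every moment polytope is a segment and all barycenter conditions reduce to quotients of \emph{one-dimensional} integrals $\int_{-a_+}^{a_-} t^{\epsilon}(t+a_c)^k(a_c-t)^{k-1}\,dt$. Your $S_k\times S_{k+1}$-reduction of the toric polytope only captures the Weyl symmetry of a Levi subgroup, not the full horospherical action, and this costs you a dimension of integration. Correspondingly, of your ``two balancing equations'' one is automatically satisfied (it lies in the isotropy-character direction, which is fixed once the decomposition of $-K_X$ is chosen); only a single scalar condition $C(a_c,a_+,a_-)=0$ survives. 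For the non-KE part, the paper in fact proves an exact result for \emph{all} $k$ (Theorem~\ref{thm_BlPP}) via a closed-form primitive and a monotonicity argument for $x\mapsto\bigl(\tfrac{x-1}{x+1}\bigr)^x$, rather than an asymptotic one.

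For the coupled metrics, the paper avoids any implicit-function argument. Since $C>0$ at the midpoint decomposition (this \emph{is} the non-KE statement), continuity reduces the problem to exhibiting a single ample decomposition with $C<0$. The paper takes the explicit choice $a_c=k-2$, $a_+=\tfrac12$, $a_-=0$ and shows by elementary estimates that the two one-dimensional integral quotients tend to $-\tfrac14$ and $0$ as $k\to\infty$, whence $C\to -\tfrac34<0$. Your route would instead require solving a limiting two-dimensional system, verifying non-degeneracy of its Jacobian, and keeping the solution uniformly interior to the ample cone; you carry out none of these, and each is at least as delicate as the paper's direct estimate. So the gap is not a wrong idea but an unexecuted one, and the horospherical framework is what turns the execution from a programme into a short computation.
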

We believe this holds more generally and our proof could easily be adapted to prove other particular cases but we found no general proof. 
\begin{quest}
Does there exist a pair of coupled Kähler-Einstein metrics on all blow-ups of the projective space along complementary subspaces ? 
Is it true that there exist no coupled Kähler-Einstein metrics on $\Bl_{Q^k}Q^n$, for $2\leq k\leq n-3$?
\end{quest}
Similarly, we are convinced that Mabuchi metrics exist on all blow-ups of projective space we consider. Finding no elegant general proof of this fact, we instead provide a positive answer to Question~\ref{ques_pol} in a particular case. 
\begin{thm}
\label{thm_polMab}
Let $X$ be a homogeneous $\bbP^1$-bundle over a rational homogeneous manifold $G/P$, where $G$ is a reductive group and $P$ a parabolic subgroup of $G$. Assume $X$ is Fano, then $X$ admits Kähler-Ricci solitons, and $X$ admits multiplier Hermitian structures as in Equation~\eqref{eqn_mhs}, where $h$ is the logarithm of a polynomial of degree equal to the dimension of $G/P$. 
\end{thm}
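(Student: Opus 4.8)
The plan is to realize a homogeneous $\mathbb{P}^1$-bundle $X$ over $G/P$ as a horosymmetric (in fact spherical) manifold whose combinatorial data—the moment polytope, the Duistermaat–Heckman measure, and the relevant root-theoretic weights—are all explicitly computable, and then to run the continuity-method/Monge–Ampère reduction to a one-variable problem on an interval. Since $X$ is a homogeneous $\mathbb{P}^1$-bundle, it is $\mathbb{P}(\mathcal{O}\oplus\mathcal{L})$ for a homogeneous line bundle $\mathcal{L}=\mathcal{L}_\lambda$ on $G/P$, and the fibrewise $\mathbb{C}^*$-action together with the $G$-action makes the complexity-one structure transparent. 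The first step is therefore to write down, for $X$ Fano, the anticanonical polytope as a segment's worth of polytopes $\Delta_t\subset\mathfrak{a}^*$ fibred over $t$ in an interval $I=[a,b]$, identify the density function $p(t)=\mathrm{Vol}(\Delta_t)\cdot(\text{correction from the }G/P\text{ directions})$ as the volume of a shifted Weyl polytope, and record that the barycentre-type obstruction (the Futaki invariant, equivalently the condition for a Kähler–Einstein metric) is the vanishing of $\int_I (t-c)\,p(t)\,e^{?}\,dt$ for the appropriate normalization.

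Second, I would invoke the solvability results for such fibred Monge–Ampère equations in the presence of a one-parameter symmetry: after the Legendre/fibration reduction the soliton equation becomes an ODE boundary-value problem for a convex function on $I$ whose solvability is governed by a single moment condition, and Wang–Zhu-type results (or their horosymmetric generalizations as in the author's earlier work invoked in the excerpt) guarantee that a Kähler–Ricci soliton exists: one chooses the holomorphic vector field $\xi$ to be the generator of the fibrewise $\mathbb{C}^*$ scaled so that the soliton moment condition $\int_I (t-\text{bar})\,e^{\langle\xi,t\rangle}p(t)\,dt=0$ holds; concavity of the linear $h=\mathrm{id}$ is automatic. This establishes the first assertion of the theorem.

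Third—and this is the genuinely new part—I must upgrade from $h=\mathrm{id}$ to $h=\log P$ for a polynomial $P$ of degree $\dim G/P$. The idea is that the correction factor $p(t)$ coming from the $G/P$-directions is itself, by the Weyl dimension formula applied fibrewise (or by explicit integration of $e^{\langle\cdot,\cdot\rangle}$ against the Duistermaat–Heckman measure of $G/P$), a \emph{polynomial} in $t$ of degree exactly $\dim G/P$. One then rewrites the multiplier Hermitian equation so that the required density becomes $e^{h\circ\theta}$ times the $\mathbb{P}^1$-fibre factor, and observes that choosing $h(s)=\log(\text{that polynomial evaluated at the affine reparametrization of }s)$ makes the $G/P$-polynomial factor cancel, reducing \eqref{eqn_mhs} to exactly the equation one solves on a toric $\mathbb{P}^1$-bundle over a point—i.e. a genuinely toric (indeed $\mathbb{P}^1$) Kähler–Ricci soliton problem, which is solvable. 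Concavity of this $h$ on the relevant interval must be checked: since the polynomial is a product of positive affine factors (the shifted positive roots evaluated along the segment, each linear in $t$ with positive slope up to sign conventions), $\log P$ is a sum of $\log$'s of positive affine functions, hence concave, on the interior of $I$.

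The main obstacle I expect is precisely this last concavity-and-positivity bookkeeping together with the verification that the affine reparametrization aligns the $G/P$-density polynomial with the variable $\theta$ appearing in \eqref{eqn_mhs}: one must be careful that $\theta$ is (an affine function of) the fibre moment coordinate $t$, that the positive-root factors $\prod_{\alpha>0,\ \langle\alpha,\lambda\rangle\neq 0}(\langle\alpha,\rho+\text{stuff}\rangle + t\langle\alpha,\lambda^\vee\rangle)$ are strictly positive throughout the open moment interval (this uses that $X$ is Fano, which bounds $I$ inside the region where all these linear forms are positive), and that the resulting degree is \emph{exactly} $\dim G/P$ rather than smaller (no accidental vanishing of leading terms, which again follows from $\lambda$ being a genuine ample-type class on $G/P$). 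Once positivity on the open interval and the degree count are pinned down, concavity and the reduction to a solvable one-dimensional problem are formal, and the existence of the multiplier Hermitian structure with $h=\log P$, $\deg P=\dim G/P$, follows.
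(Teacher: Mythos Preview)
Your setup is correct: $X$ is rank-one toroidal horospherical, and by \cite{DH} the multiplier-Hermitian existence reduces to finding a smooth concave $h$ on $[-1,1]$ with
\[
\int_{-1}^{1} t\, e^{h(t)}\, P_{DH}\!\Big(t\varpi+\sum_{\alpha\in -\Phi_{P^u}}\alpha\Big)\,\dint t = 0,
\]
where $P_{DH}$ is a product of $|\Phi_{P^u}|=\dim(G/P)$ affine factors, each positive on the interval. Your treatment of the soliton part is fine. But the mechanism you propose for the polynomial multiplier structure does not work. You say you will take $h=\log(\text{that polynomial})$ so that the $G/P$-factor \emph{cancels}, reducing the equation to a bare toric $\bbP^1$ problem. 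Cancellation would require $e^{h(t)}=1/P_{DH}(t\varpi+\cdots)$, i.e.\ $h=-\log P_{DH}$, which is \emph{convex}; taking instead $h=+\log P_{DH}$ gives density $P_{DH}^2$, which neither cancels nor has centred barycentre in general. There is no way to strip off $P_{DH}$ with a concave $h$, so the ``reduce to $\bbP^1$ with constant density'' picture is not available.

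The paper's proof is a one-line symmetry trick that your proposal does not invoke. The anticanonical interval is symmetric about $0$, and one sets
\[
e^{h(t)} \;=\; P_{DH}\!\Big(-t\varpi+\sum_{\alpha\in -\Phi_{P^u}}\alpha\Big).
\]
Then $e^{h(t)}\,P_{DH}(t\varpi+\cdots)$ is even in $t$, the full integrand is odd, and the moment condition holds automatically---nothing further to solve. This $h$ is the logarithm of a product of $\dim(G/P)$ affine factors, each strictly positive on $[-1,1]$ because the toroidal moment polytope lies strictly in the positive Weyl chamber; hence $h$ is smooth and concave, and the degree is exactly $\dim(G/P)$. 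So the missing idea is not cancellation but \emph{symmetrization}: multiply by the reflected Duistermaat--Heckman polynomial rather than try to divide by the original one.
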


In the body of the paper we will prove the results stated in this introduction. 
The results to prove are translated into combinatorial conditions by using \cite{DH}. 
More precisely, Theorem~\ref{thm_blQ} is proved using \cite[Corollary~1.3]{DH}, Theorem~\ref{thm_cKE_BlP} is proved using \cite[Corollary~1.5]{DH}, and Theorem~\ref{thm_polMab} is proved using \cite[Theorem~1.2]{DH}.  
As already mentioned, it does not mean that checking the conditions is trivial, as soon as we consider infinite families. 
Nonetheless the arguments to be used pertain to elementary mathematics. 
An additional goal of the paper is the description of all the combinatorial data associated to the families of examples considered (in particular a full description of the Picard group and moment polytopes associated to ample line bundles). 
Our hope is that these detailed examples will provide an easier entry point to the theory of horosymmetric and spherical manifolds, and can more readily be used to check properties of other canonical metrics or other geometric properties. 

In section~\ref{sec_BlP} we consider the blow-ups of projective spaces along complementary linear subspaces as horospherical manifolds (the combinatorial data can be used as well to study blow-ups at a single linear subspace as horospherical manifolds), and prove Theorem~\ref{thm_cKE_BlP} and Theorem~\ref{thm_polMab}. 
In sections~\ref{sec_BlQss}~and~\ref{sec_BlQred}, we study the blow-up of a quadric along a linear subquadric as a symmetric variety and prove Theorem~\ref{thm_blQ}. 
In section~\ref{sec_BlQss} we treat the case when the symmetric space is semisimple, which contains the K-unstable cases. 
In section~\ref{sec_BlQred} we treat the remaining case. 
This final section also contains an interesting family: the blow-up of a quadric at a point admits no Mabuchi metrics, starting from dimension three.

Each one of the three sections follows the same structure. 
We begin by a geometric description of the manifolds and of the orbits under the natural group action to consider.
We then describe the full combinatorial data associated to these manifolds, as horosymmetric varieties. 
First we describe the combinatorial data encoding the manifold itself, then we describe the combinatorial data associated to each ample $\bbR$-divisor. 
Finally we examine the combinatorial conditions encoding the existence of canonical metrics to prove our main results. 

\subsection*{Acknowledgements}
It is my pleasure to thank Jakob Hultgren for our joint work which allowed to prove half of the results in this article, and which provided motivation to study these particular families of examples. 
Part of this work was initially written in preparation for talks at two conferences during the summer of 2019. 
I thank Ivan Cheltsov and Simone Calamai for the invitations to these conferences.   
This research was accomplished in part during my postdoc in Strasbourg funded by the Labex IRMIA, and received partial funding from ANR Project FIBALGA ANR-18-CE40-0003-01. 

\section{Blow-ups of projective space along complementary linear subspaces}
\label{sec_BlP}

\subsection{Description of the manifolds}
Let $p$ and $n$ be two integers such that   
\[2\leq p\leq n-2, \] 
Consider the complex reductive group $G:=S(\GL_p\times \GL_{n-p})$. 
We consider $G$ as embedded in $\GL_n$ by block-diagonal matrices: if $(A,B)\in G$, we identify it with $\begin{pmatrix} A & 0 \\ 0 & B \end{pmatrix}$.  

Write $\bbC^n=\bbC^p\oplus \bbC^q$ and consider the induced action of the group $G$. 
There are four orbits under this action: the fixed point $\{0\}$, the pointed linear subspaces $(\bbC^p\setminus \{0\})\times \{0\}$ and $\{0\} \times (\bbC^q\setminus \{0\})$, and the open dense orbit $(\bbC^p\setminus \{0\}) \times (\bbC^q\setminus \{0\})$. 
It induces as well an action of $G$ on $\bbP^{n-1}$ with three corresponding $G$-orbits: an open dense orbit, and two disjoint linear subspaces (which we denote by $\bbP^{p-1}$ and $\bbP^{q-1}$ in an abuse of notations). 

The blow-ups of the projective space along one or both of these linear subspaces are Fano manifolds. 
We will focus on the manifold obtained by blowing up both orbits, 
and we will denote the manifold by 
\[ \blPPP{n}{p} := \Bl_{\bbP^{p-1},\bbP^{q-1}}(\bbP^{n-1}). \]
By the description above, this manifold is equipped with an action of $G$. 

It is a standard fact, though not obvious, that the neutral components of the automorphism group $\Aut(\Bl_Y(X))$ of a blow-up and of the stabilizer $\Stab_{\Aut(X)}(Y)$ of the blown-up submanifold in the automorphism group of the initial manifold are isomorphic. 
The non-obvious direction may be seen as a consequence of Blanchard's Lemma (see \cite{BSU13,Bla56}). 

The stabilizer of $\bbP^{p-1}$ in the automorphism group $\Aut(\bbP^{n-1})=\PGL_n$ of the projective space is the image of the group of upper block-triangular invertible matrices with two square diagonal blocks of size $p$ and $q$.    
The neutral component $\Aut^0(\Bl_{\bbP^{p-1}}(\bbP^{n-1}))$ of $\Aut(\Bl_{\bbP^{p-1}}(\bbP^{n-1}))$ is thus exactly the stabilizer described above. 
In particular, this group is not reductive and the image of $G$ in it is a Levi subgroup. 
For $\Bl_{\bbP^{p-1},\bbP^{q-1}}(\bbP^{n-1})$, the same argument shows that $\Aut^0(\Bl_{\bbP^{p-1},\bbP^{q-1}}(\bbP^{n-1}))$ is the Levi subgroup above, hence the image of $G$ is the full connected automorphism group. 
Note in particular that the automorphism group of $\blPPP{n}{p}$ is reductive. 

Under the action of $G$, the manifold $\blPPP{n}{p}$ is a horospherical manifold, as we will explain in the next section. 
This structure, taking into account the full group of automorphisms, is the most natural to consider. 
However, we should note that $\blPPP{n}{p}$ is:
\begin{itemize}
\item a toric manifold if we consider only the action of a maximal torus,
\item a cohomogeneity one manifold if we consider only the action of a maximal compact subgroup
\item a $\bbP^1$-bundle over the product of projective spaces $\bbP^{p-1}\times \bbP^{q-1}$. 
\end{itemize} 

\subsection{Combinatorial data associated to the manifold}

\subsubsection{The open orbit}
Let $T\subset \SL_n$ denote the maximal torus (both in $G$ and $\SL_n$) formed by diagonal matrices, and denote the roots of $\SL_n$ with respect to $T$ by 
\begin{align*}
\alpha_{i,j}: T & \to \bbC^* \\
t & \mapsto \frac{d_i}{d_j}
\end{align*}
where $t=\diag(d_1,\ldots,d_n)$, for $1\leq i\neq j\leq n$.
The roots of $G$ are the $\alpha_{i,j}$ with $1\leq i\neq j \leq p$ or $p+1\leq i\neq j\leq n$.
The set of characters of $T$ is denoted by $\mathfrak{X}(T)$ and the set of one-parameter subgroups is denoted by $\mathfrak{Y}(T)$. The Killing form of $G$ is denoted by $\killing{x}{y}$ for $x,y\in\mathfrak{g}$.

It is immediate to determine the stabilizer $H$ of the point $(1:0\cdots :0:1:0:\cdots :0)$ in $\bbP^{n-1}$, which is in the open orbit (the $1$ are at position $1$ and $(p+1)$). 
Let $P$ denote the parabolic subgroup of $G$ containing $T$ such that the roots of its unipotent radical are 
\[ \Phi_{P^u}=\{\alpha_{1,2}, \ldots, \alpha_{1,p},\alpha_{p+1,p+2}, \ldots, \alpha_{p+1,n}\}. \] 
Then $H$ is the kernel in $P$ of the character of $P$ defined by $\alpha_{1,p+1}$.

As a consequence, $H$ contains the unipotent radical of $P$, and $G/H$ is a rank one horospherical homogeneous space. 
To be consistent with the conventions in \cite{DH} and \cite{DelHoro}, 
we fix an involution $\sigma$ of $\mathfrak{t}$ such that $\mathfrak{t}^{\sigma}=\ker(\alpha_{1,p+1})$. 
It amounts to choosing the fixed point set of $\sigma$, a complement to $\bbR\alpha_{1,p+1}$ in $\mathfrak{X}(T)\otimes \bbR$. 
Here, we choose the orthogonal of $\alpha_{1,p+1}$ with respect to the Killing form to define $\sigma$. 

Let us describe the combinatorial invariants corresponding to the spherical homogeneous space $G/H$. 
We do this with respect to the Borel subgroup $B$ of $G$ containing $T$ whose corresponding simple roots are 
\[ S = \{\alpha_{2,1},\ldots,\alpha_{p,p-1},\alpha_{p+2,p+1},\ldots \alpha_{n,n-1}\}, \] 
to ensure again consistency with the conventions in \cite{DH} 
and \cite{DelHoro}.
The \emph{spherical lattice} $\mathcal{M}$ is then $\mathfrak{X}(T/T\cap H)=\bbZ \alpha_{1,p+1}$. 
The \emph{valuation cone} is the vector space generated by $\alpha_{1,p+1}$: $\mathcal{V}= \mathcal{M}\otimes \bbR$. 
The \emph{colors} are in bijection with the set $-\Phi_{P^u}\cap S=\{\alpha_{2,1},\alpha_{p+2,p+1}\}$, we denote these by $D_{\alpha_{2,1}}$ and $D_{\alpha_{p+2,p+1}}$ respectively. 
Recall that given a root $\alpha\in \mathfrak{X}(T)$ of $G$, the coroot $\alpha^{\vee}\in \mathfrak{Y}(T)$ is defined by $\alpha(x)=2 \frac{\killing{x}{\alpha^{\vee}}}{\killing{\alpha^{\vee}}{\alpha^{\vee}}}$ for all $x\in \mathfrak{Y}(T)\otimes\bbR$.
The \emph{color map} sends $D_{\alpha}$ to the restriction $\alpha^{\vee}|_{\mathcal{M}\otimes\bbR}$ for $\alpha\in\{\alpha_{2,1},\alpha_{p+2,p+1}\}$. 
The images are thus respectively $-e$ and $e$ where $e=\alpha_{1,p+1}^*$ denotes the element dual to $\alpha_{1,p+1}$ in $(\mathcal{M}\otimes \bbR)^*$, as illustrated in Figure~\ref{fig_MtimesR}. 

\begin{figure}
\centering
\caption{$\mathcal{M}\otimes\bbR$}
\label{fig_MtimesR}
\begin{tikzpicture}  
\draw (-5,0) -- (6,0);
\draw (0,0) node{$\bullet$};
\draw (0,0) node[below]{$0$};
\draw (2.5,0) node{$\bullet$};
\draw (2.5,0) node[below right]{$e=\alpha_{1,p+1}^*=\alpha_{p+2,p+1}^{\vee}|_{\mathcal{M}\otimes\bbR}$};
\draw (-2.5,0) node{$\bullet$};
\draw (-2.5,0) node[below left]{$-e=\alpha_{2,1}^{\vee}|_{\mathcal{M}\otimes\bbR}$};
\end{tikzpicture}
\end{figure}

Note that if we considered the case $p=1$ or $n-p=1$, only one of the two roots $\alpha_{2,1}$ or $\alpha_{p+2,p+1}$ would be a well defined roots of $G$. There would correspondingly be only one projective subspace giving rise to a non-trivial blow-up. We will not give the details but the combinatorial description is a minor variation of the one presented here. 

\subsubsection{Colored fans and embeddings}
The colored fan of $\bbP^{n-1}$ is the fully colored fan 
\[ \{(\bbR_+ e,D_{\alpha_{p+2,p+1}}),(-\bbR_+ e,D_{\alpha_{2,1}}),(\{0\},\emptyset)\}, \]
where the first cone corresponds to the orbit $\bbP^{p-1}$, the second to $\bbP^{q-1}$ and the last to the open orbit. 
The colored fan of $\Bl_{\bbP^{q-1}}\bbP^{n-1}$ is 
\[ \{(\bbR_+ e,D_{\alpha_{p+2,p+1}}),(-\bbR_+ e,\emptyset),(\{0\},\emptyset)\}. \]  
The colored fan of $\Bl_{\bbP^{p-1}}\bbP^{n-1}$ is 
\[ \{(\bbR_+ e,\emptyset),(-\bbR_+ e,D_{\alpha_{2,1}}),(\{0\},\emptyset)\}. \] 
Finally, the colored fan of $\Bl_{\bbP^{p-1},\bbP^{q-1}}\bbP^{n-1}$ is the colorless fan 
\[ \{(\bbR_+ e,\emptyset),(-\bbR_+ e,\emptyset),(\{0\},\emptyset)\}. \] 

\subsection{Combinatorial data associated to divisors}

\subsubsection{Picard group, isotropy characters and polytopes:}
Recall that the Picard group of a general spherical variety is described by Brion in \cite{Bri89}. 
We recalled this description and introduced a few convenient notions in \cite{DelHoro,DH} to deal with moment polytopes in the special case of horosymmetric manifolds. We will use the language of \cite{DelHoro} here, notably we will use the terms \emph{special divisor} and \emph{isotropy character} as defined in that paper. 

The Picard group of $\Bl_{\bbP^{p-1},\bbP^{q-1}}\bbP^{n-1}$ is of rank $3$. 
Let $D_{\pm}$ denote the $G$-invariant divisor associated to the ray $\pm \bbR_+ e$.
The Picard group is generated by $D_{\alpha_{2,1}}$, $D_{\alpha_{p+2,p+1}}$, $D_+$ and $D_-$ with the relation 
\[ D_++D_{\alpha_{p+2,p+1}}=D_-+D_{\alpha_{2,1}}. \] 
For $D_{\pm}$, they already are \emph{special} divisors, and the isotropy characters are trivial. 
Up to renormalizing the action by a character of $G$, the linearized line bundle associated to $D_{\alpha_{2,1}}$ admits a $B$-semi-invariant section whose $B$-weight $\varpi_{2,1}$ is given by 
$\mathrm{diag}(d_1,\cdots,d_n)\mapsto d_1^{-1}$ 
(this is, up to a character of the reductive group $G$ the fundamental weight of $\alpha_{2,1}$). 
Then the linearly equivalent \emph{special} $\bbQ$-divisor is 
\begin{align*} D_{\alpha_{2,1}}-\frac{\killing{\varpi_{2,1}}{\alpha_{1,p+1}}}{\killing{\alpha_{1,p+1}}{\alpha_{1,p+1}}}(D_++D_{\alpha_{p+2,p+1}}-D_--D_{\alpha_{2,1}}) & \\
  =  \frac{1}{2}(D_++D_{\alpha_{p+2,p+1}}+D_{\alpha_{2,1}}-D_-)  &
\end{align*}
and its isotropy character is 
\[ \chi_{2,1}:=\varpi_{2,1}-\frac{\killing{\varpi_{2,1}}{\alpha_{1,p+1}}}{\killing{\alpha_{1,p+1}}{\alpha_{1,p+1}}}\alpha_{1,p+1}
=\varpi_{2,1}+\frac{1}{2}\alpha_{1,p+1}. \]

As a consequence, special $\bbR$-divisors are all the  
\[ D(a_c,a_+,a_-):=a_c(D_{\alpha_{2,1}}+D_{\alpha_{p+2,p+1}})+a_+D_+ + a_-D_- \] 
for $a_c$, $a_+$, $a_-\in\bbR$. 
The isotropy character of such a divisor is $2a_c\chi_{2,1}$, and the associated \emph{special polytope} is defined by 
\[ \Delta(a_c,a_+,a_-)=\{ t\alpha_{1,p+1} \mid \max(-a_+,-a_c) \leq t\leq \min(a_-,a_c) \}. \]
The moment polytope, provided the divisor is ample, is
\[ \Delta^+(a_c,a_+,a_-) = 2a_c\chi_{2,1}+\Delta(a_c,a_+,a_-). \]
Note that Brion's ampleness criterion \cite[Théorème~3.3]{Bri89} states, in our case, that $D(a_c,a_+,a_-)$ is ample if and only if $a_+<a_c$, $a_-<a_c$ and $a_++a_->0$. 

\begin{figure}
\centering
\caption{Moment polytope for an ample divisor on $\Bl_{\bbP^{p-1},\bbP^{q-1}}\bbP^{n-1}$}
\label{fig_polBlP}
\begin{tikzpicture}  
\draw [dashed] (0,-.5) -- (0,3.5);
\draw [dashed] (-1.5,0) -- (3.5,0);
\draw [dotted] (-1,0) -- (-1,3.5);
\draw [dotted] (1.5,0) -- (1.5,3.5);
\draw [thick] (-1,3.5) -- (1.5,3.5);
\draw (0,0) node{$\bullet$};
\draw (-1,0) node{$\bullet$};
\draw (-1,0) node[below left]{\small $-a_+\alpha_{1,p+1}$};
\draw (1.5,0) node{$\bullet$};
\draw (1.5,0) node[below right]{\small $a_-\alpha_{1,p+1}$};
\draw (-1,3.5) node{$\bullet$};
\draw (1.5,3.5) node{$\bullet$};
\draw (1.5,3.5) node[right]{$\Delta^+(a_c,a_+,a_-)$};
\draw (0,3.5) node{$\bullet$};
\draw (0,3.5) node[above right]{\small $2a_c\chi_{2,1}$};
\end{tikzpicture}
\end{figure}
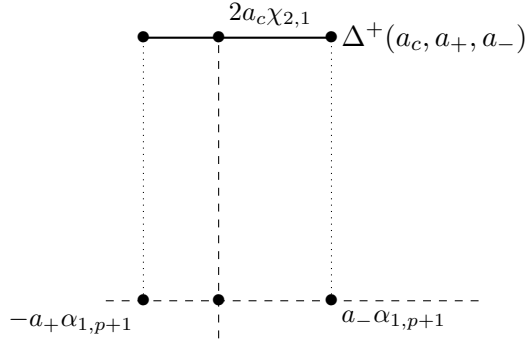

The special divisor $D\left(\frac{n}{2},p+1-\frac{n}{2},\frac{n}{2}-p+1\right)$ represents the anticanonical divisor, its isotropy character is $n\chi_{2,1}$, thus the corresponding moment polytope is  
\[ \Delta^+\left(\frac{n}{2},p+1-\frac{n}{2},\frac{n}{2}-p+1\right) = n\chi_{2,1}+\Delta\left(\frac{n}{2},p+1-\frac{n}{2},\frac{n}{2}-p+1\right). \] 

\subsubsection{Duistermaat-Heckman polynomial and barycenters}

Key ingredients in the combinatorial criterions for existence of canonical metrics are the Duistermaat-Heckman polynomial and barycenters of moment polytopes with respect to this polynomial. We give here a first expression of these in our special case. 

It is enough to express the Duistermaat-Heckman polynomial $P_{DH}$ in the plane generated by $\alpha_{1,p+1}$ and $2\chi_{2,1}$ since every moment polytope lives in this plane. 
We have 
\begin{align*} 
P_{DH}(x\alpha_{1,p+1}+2y\chi_{2,1}) & = \prod_{\alpha\in -\Phi_{P^u}}\killing{\alpha}{x\alpha_{1,p+1}+2y\chi_{2,1}} \\
& = (-x+y)^{p-1}(x+y)^{n-p-1}
\end{align*}
The Duistermaat-Heckman barycenter corresponding to the ample divisor $D(a_c,a_+,a_-)$ is, for any choice of Lebesgue measure $\dint p$,
\begin{align*} 
\barDH(a_c,a_+,a_-) & = \frac{\int_{\Delta^+(a_c,a_+,a_-)} pP_{DH}(p)\dint p}{\int_{\Delta^+(a_c,a_+,a_-)} P_{DH}(p)\dint p}  \\
 & = 2a_c\chi_{2,1} + \left( \frac{\int_{-a_+}^{a_-} t(t+a_c)^{n-p-1}(a_c-t)^{p-1} \dint t}{\int_{-a_+}^{a_-} (t+a_c)^{n-p-1}(a_c-t)^{p-1}\dint t} \right)\alpha_{1,p+1}
\end{align*}

\subsection{Kähler-Einstein criterion}

Since $X$ is a toroidal horospherical manifold, the combinatorial criterion ruling the existence of Kähler-Einstein metrics on $X$ was originally proved by Podesta and Spiro \cite{PS10}. 
It was reproved on several occasions by the author, in \cite{DelKSSV} and \cite{DelHoro}. 
The criterion states that $X$ is Kähler-Einstein if and only if the barycenter of the moment polytope associated to the anticanonical divisor with respect to the Duistermaat-Heckman polynomial $P_{DH}$ coincides with the opposite of the sum of positive roots of the parabolic subgroup $P$. In formulas:
\begin{prop}[application of Theorem~B and Proposition~4.7 in \cite{PS10}]
The manifold $\Bl_{\bbP^{p-1},\bbP^{q-1}}\bbP^{n-1}$ is Kähler-Einstein if and only if 
\[ \barDH\left(\frac{n}{2},p+1-\frac{n}{2},\frac{n}{2}-p+1\right)= \sum_{\alpha\in -\Phi_{P^u}}\alpha. \]
\end{prop}

Note that 
\begin{align*}
\sum_{\alpha\in -\Phi_{P^u}}\alpha & = \alpha_{2,1}+\cdots+\alpha_{p,1}+\alpha_{p+2,p+1}+\cdots+\alpha_{n,p+1} \\
& = n\chi_{2,1} + \left(\frac{n}{2}-p\right)\alpha_{1,p+1} 
\end{align*}

Hence the manifold $\Bl_{\bbP^{p-1},\bbP^{q-1}}\bbP^{n-1}$ is Kähler-Einstein if and only if 
\[ I(n,p):=\int_{-(p+1-\frac{n}{2})}^{\frac{n}{2}-p+1} \left(t+p-\frac{n}{2}\right)\left(t+\frac{n}{2}\right)^{n-p-1}\left(\frac{n}{2}-t\right)^{p-1}\dint t = 0. \]

\begin{lem}
\label{thm_BlPP}
The manifold $\Bl_{\bbP^{p-1},\bbP^{q-1}}\bbP^{n-1}$ is Kähler-Einstein if and only if $n$ is even and $p=\frac{n}{2}$. Else we have $I(n,p) > 0$ if $p < \frac{n}{2}$ and $I(n,p) < 0$ if $p > \frac{n}{2}$.
\end{lem}

\begin{proof}
By using the variable $x=t+p-\frac{n}{2}$, we get 
\[ I(n,p)=\int_{-1}^1 x(x+n-p)^{n-p-1}(p-x)^{p-1} \dint x. \]
The integrand has a simple primitive, providing the explicit formula  
\begin{align*} 
I(n,p) & =\Big[ \frac{-1}{n}(p-x)^p(n-p+x)^{n-p} \Big]_{-1}^1 \\
& = \frac{-1}{n}\left( (p-1)^p(n-p+1)^{n-p} - (p+1)^p(n-p-1)^{n-p} \right) 
\end{align*}
We claim that the function $f:x\mapsto \left(\frac{x-1}{x+1}\right)^x$ is strictly increasing on $[1,\infty[$. Once the claim is proved, the theorem is as well in view of the last expression of $I(n,p)$.  
A direct computation of the derivative yields 
\[ f'(x)=\left(\ln\left(\frac{x-1}{x+1}\right)+\frac{2x}{x^2-1}\right)\left(\frac{x-1}{x+1}\right)^x. \]
By a power series expansion, for $0<y<1$ we have
\[ \ln\left(1-y\right)+\frac{y(2-y)}{2}\frac{1}{1-y} = 
\sum_{k\in \bbN} \frac{k+1}{2(k+3)}y^{k+3} >0. \]
Setting $y=\frac{2}{x+1}$, we obtain $f'(x)>0$ for $x>1$, thus the claim.
\end{proof}

\subsection{Mabuchi-type metrics}

The combinatorial criterion, proved by the author and Jakob Hultgren \cite[Corollary~1.5]{DH}, ruling the existence of Mabuchi metrics on $\Bl_{\bbP^{p-1},\bbP^{q-1}}\bbP^{n-1}$ reads as follows. 
We place ourselves in the situation where the manifold does not admit any Kähler-Eintein metrics, that is $p\neq \frac{n}{2}$. 
Set 
\[ J:=\int_{-1}^1 x^2(x+n-p)^{n-p-1}(p-x)^{p-1} \dint x>0. \]
Choose $A$ and $B\in \bbR$ such that 
\[ \int_{-1}^1 x(Ax+B)(x+n-p)^{n-p-1}(p-x)^{p-1} \dint x = AJ+BI = 0. \]  
There does not exist a Mabuchi metric if and only if for all such choices, the affine function $x\mapsto Ax+B$ vanishes on $[-1,1]$, that is, if and only if $-\frac{B}{A}=\frac{J}{I}\in [-1,1]$. 
If $p<\frac{n}{2}$ then $I>0$ and the latter condition is equivalent to $J-I\leq 0$. If $p>\frac{n}{2}$ then it is equivalent to $J+I\leq 0$. 

From checking on examples, it appears that the manifolds $\Bl_{\bbP^{p-1},\bbP^{q-1}}\bbP^{n-1}$ admit Mabuchi metrics in all cases. 
However we did not find an elegant way to prove this. 
On the other hand, we can easily show that there always exists a Mabuchi-type metric (or multiplier Hermitian structure) on any rank one toroidal horospherical Fano manifold. 

Recall that the notion of multiplier Hermitian structure was introduced by Mabuchi in \cite{Mab03} and consists of a Kähler metric $\omega\in c_1(X)$ solving the complex Monge-Ampère equation 
\[ \Ric(\omega)-\sqrt{-1}\partial\bar{\partial}h(\theta) = \omega \]
for some smooth real valued concave function $h$ defined on the range of $\theta$, where $\theta$ is the Hamiltonian function of $\omega$ with respect to some holomorphic vector field (equivalently, some action of $\bbC^*$). 

In the remaining of this subsection, we consider a Fano manifold $X$, which is an arbitrary rank one toroidal horospherical manifold under the action of a reductive group $G$ with a one-dimensional center containing a $\bbC^*$ subgroup. 
Equivalently, the manifold $X$ is a homogeneous $\bbP^1$-bundle over a rational homogeneous space $G/P$. 
The action of the central subgroup $\bbC^*$ is the natural action on the fibers.

\begin{proof}[Proof of Theorem~\ref{thm_polMab}]
Let $G/H$ be the open orbit in $X$. 
Let $P$ denote the normalizer of $H$ in $G$ and choose a Borel subgroup $B$ opposite to $P$, and $T$ a maximal torus in $B\cap P$. Then the moment polytope $\Delta^+$ of $X$ is of the form 
\[ \Delta^+=\left\{ t\varpi +\sum_{\alpha\in-\Phi_{P^u}}\alpha \mid -1\leq t\leq 1 \right\} \]
where $\varpi$ is a generator of the one dimensional lattice $\mathcal{M}$ (see \cite[p.~207]{Pas08}).
Then by \cite[Theorem~1.2]{DH}, there exists a multiplier Hermitian structure on $X$ if and only if we can find a smooth, real valued, concave function $h$ on $[-1,1]$ such that 
\[ \int_{-1}^1te^{h(t)}P_{DH}\left(t\varpi+\sum_{\alpha\in -\Phi_{P^u}}\alpha\right)\dint t = 0 \]
where, as usual, $P_{DH}$ denotes the Duistermaat-Heckman polynomial for $X$: $P_{DH}(x)=\prod_{\alpha\in -\Phi_{P^u}}\killing{\alpha}{x} $ for $x\in \mathfrak{X}(T)$.
Thanks to the symmetry of the polytope, an obvious choice is given by 
\[ e^{h(t)}=P_{DH}\left( -t\varpi+\sum_{\alpha\in -\Phi_{P^u}}\alpha \right). \]
The corresponding function $h$ is obviously smooth, real valued, and concave: $\killing{\alpha}{-t\varpi+\sum_{\alpha\in -\Phi_{P^u}}\alpha}>0$ for all $-1\leq t\leq 1$ and $\alpha\in -\Phi_{P^u}$ since the moment polytopes are strictly contained in the positive Weyl chamber for toroidal horospherical manifolds. 
\end{proof}

\subsection{Coupled Kähler-Einstein metrics}
In this section, we prove that there exists infinitely many couples $(n,p)$ such that the manifold $\blPPP{n}{p}$ admits no Kähler-Einstein metrics but admits a couple of coupled Kähler-Einstein metrics, by proving Theorem~\ref{thm_cKE_BlP} as stated in the introduction.

\begin{proof}[Proof of Theorem~\ref{thm_cKE_BlP}]
We place ourselves in the situation where $n=2k+1$ is odd, and $p=k$.

Using Theorem~1.2 of \cite{DH}, we want to find $a_c$, $a_+$ and $a_-$ such that 
\begin{itemize}
\item $\barDH(a_c,a_+,a_-)+\barDH(k+\frac{1}{2}-a_c,\frac{1}{2}-a_+,\frac{3}{2}-a_-)=n\chi_{2,1}+\frac{1}{2}\alpha_{1,k+1}$
\item $a_+<a_c<k+a_+$
\item $a_-<a_c<k-1+a_-$
\item $0<a_++a_-<2$.
\end{itemize}
Set $a_c'=k+\frac{1}{2}-a_c$, $a_+'=\frac{1}{2}-a_+$ and $a_-'=\frac{3}{2}-a_-$.
The three last conditions ensure that the divisors $D(a_c,a_+,a_-)$ and $D(a_c',a_+',a_-')$ are ample. Their sum is obviously the anticanonical divisor, and the first condition means that the coupled Kähler-Einstein equation is satisfied for this decomposition. 

Let $C(a_c,a_+,a_-)$ denote the quantity 
\[ 
\frac{\int_{-a_+}^{a_-} t(t+a_c)^{k}(a_c-t)^{k-1}\dint t}
{\int_{-a_+}^{a_-} (t+a_c)^{k}(a_c-t)^{k-1}\dint t}
+
\frac{\int_{-a_+'}^{a_-'} t(t+a_c')^{k}(a_c'-t)^{k-1}\dint t}
{\int_{-a_+'}^{a_-'} (t+a_c')^{k}(a_c'-t)^{k-1}\dint t}
 - \frac{1}{2}
\]
then the first condition is equivalent to $C(a_c,a_+,a_-)=0$. 
Note that the Kähler cone is connected and that $(a_c,a_+,a_-)\mapsto C(a_c,a_+,a_-)$ is continuous. Since $2p=2k<2k+1=n$, Theorem~\ref{thm_BlPP} implies that $C(\frac{n}{4},\frac{1}{4},\frac{3}{4})>0$, hence it is enough to find a triple $(a_c,a_+,a_-)$ (satisfying the ampleness conditions) with $C(a_c,a_+,a_-)<0$ to prove the existence of coupled Kähler-Einstein metrics. 

Set $a_c=k-2$, $a_+=\frac{1}{2}$ and $a_-=0$. 
Then we have $a_c=\frac{5}{2}$, $a_+=0$ and $a_-=\frac{3}{2}$.
Such a data satisfies the ampleness conditions. 
We will show $C(a_c,a_+,a_-)<0$ for $k$ large enough by showing that the first quotient of integrals in $C(a_c,a_+,a_-)$ converges to $-\frac{1}{4}$ and that the second quotient of integrals converges to $0$. 
In fact, if $a_c$ is linear in $k$, $a_+$ and $a_-$ are constant, then the quotient of integrals converges to the middle of $[-a_+,a_-]$, and if $a_+=0$, $a'_c$ and $a'_-$ are constant, then the quotient of integrals converges to $0$. 

Using the equation 
\[ \frac{(t+k-2)^{k}(k-2-t)^{k-1}}{(k-1)^{2k-1}} =\left(1+\frac{t-1}{k-1}\right)\left(1+\frac{t-1}{k-1}\right)^{k-1}\left(1-\frac{t+1}{k-1}\right)^{k-1} \]
we see that, uniformly in $t\in [-\frac{1}{2},0]$,
\[ \lim_{k\rightarrow \infty} \frac{(t+k-2)^{k}(k-2-t)^{k-1}}{(k-1)^{2k-1}} = e^{-2} \]
hence 
\[ \lim_{k\rightarrow \infty} \frac{\int_{-\frac{1}{2}}^{0} t(t+k-2)^{k}(k-2-t)^{k-1}\dint t}
{\int_{-\frac{1}{2}}^{0} (t+k-2)^{k}(k-2-t)^{k-1}\dint t} = -\frac{1}{4}. \]

The other limit is a bit less immediate. 
Fix $0<\epsilon<\frac{3}{2}$ and consider 
\begin{equation} \label{eqn_limit}
\int_{0}^{\frac{3}{2}} (t-\epsilon)\left(t+\frac{5}{2}\right)^{k}\left(\frac{5}{2}-t\right)^{k-1}\dint t. 
\end{equation}
If we prove that the latter integral is negative for $k$ large enough, then 
\[ 0 < \frac{\int_{0}^{\frac{3}{2}} t\left(t+\frac{5}{2}\right)^{k}\left(\frac{5}{2}-t\right)^{k-1}\dint t}{\int_{0}^{\frac{3}{2}} \left(t+\frac{5}{2}\right)^{k}\left(\frac{5}{2}-t\right)^{k-1}\dint t} <\epsilon \]
for $k$ large enough. 
Actually, proving this for just one $\epsilon<\frac{3}{4}$ is enough to get the conclusion. 
In the integral~\eqref{eqn_limit}, the negative contribution is the integral on $[0,\epsilon]$, while the positive contribution is the integral on $[\epsilon,\frac{3}{2}]$.

For $\epsilon\leq t \leq \frac{3}{2}$, we have 
\begin{align*}
\left(t+\frac{5}{2}\right)^{k}\left(\frac{5}{2}-t\right)^{k-1} & = \left(\frac{5}{2}\right)^{2k-1}\left(1+\frac{2t}{5}\right)\left(1-\left(\frac{2t}{5}\right)^2\right)^{k-1} \\
& \leq \left(\frac{5}{2}\right)^{2(k-1)}\left(1+\frac{3}{5}\right)\left(1-\left(\frac{2\epsilon}{5}\right)^2\right)^{k-1}
\end{align*}
hence 
\[ \int_{\epsilon}^{\frac{3}{2}} (t-\epsilon)\left(t+\frac{5}{2}\right)^{k}\left(\frac{5}{2}-t\right)^{k-1}\dint t \leq \left(\frac{9}{8}-\frac{3\epsilon}{2}+\frac{\epsilon^2}{2}\right)\left(\frac{5}{2}\right)^{2(k-1)}\left(1+\frac{3}{5}\right)\left(1-\left(\frac{2\epsilon}{5}\right)^2\right)^{k-1} \]

For the negative contribution, it suffices to restrict to $\left[0,\frac{1}{2(k-1)}\right]$. 
Set $t=\frac{s}{k-1}$ where $s\in \left[0,\frac{1}{2}\right]$. 
Then 
\[ \frac{(t+\frac{5}{2})^{k}(\frac{5}{2}-t)^{k-1}}{(\frac{5}{2})^{2k-1}} = \left(1+\frac{2s}{5(k-1)}\right)\left(1+\frac{2s}{5(k-1)}\right)^{k-1}\left(1-\frac{2s}{5(k-1)}\right)^{k-1} \]
Hence, uniformly in $s\in [0,\frac{1}{2}]$ (or in $t\in \left[0,\frac{1}{2(k-1)}\right]$), 
\[ \lim_{k\to \infty} \frac{(t+\frac{5}{2})^{k}(\frac{5}{2}-t)^{k-1}}{(\frac{5}{2})^{2k-1}} = 1. \]
We deduce that 
\[ \int_0^{\frac{1}{2(k-1)}} (t-\epsilon)\left(t+\frac{5}{2}\right)^{k}\left(\frac{5}{2}-t\right)^{k-1}\dint t \sim_{k\rightarrow \infty} \frac{-\epsilon\left(\frac{5}{2}\right)^{2k-1}}{2(k-1)} \]

Since 
\[ 
\left(\frac{9}{8}-\frac{3\epsilon}{2}+\frac{\epsilon^2}{2}\right)\left(\frac{5}{2}\right)^{2(k-1)}\left(1+\frac{3}{5}\right)\left(1-\left(\frac{2\epsilon}{5}\right)^2\right)^{k-1}= o\left(\frac{-\epsilon(\frac{5}{2})^{2k-1}}{2(k-1)}\right), \]
the negative contribution in integral~\eqref{eqn_limit} is dominant over the positive contribution. 
We deduce that integral~\eqref{eqn_limit} is negative for $k$ large enough.
\end{proof}

\section{Unstable blow-ups of quadrics along linear subquadrics}
\label{sec_BlQss}

\subsection{Description of the manifolds}
Choose as before two positive integers $p$ and $q$ such that $p+q=n$.
For this first section on blow-ups of quadrics, we assume $3\leq p\leq n-3$. 
We will deal with the case $p=n-2$ in the next section. 

Consider the standard quadratic form $\sum_{j=1}^{n}z_j^2$ on $\bbC^{n}$. 
Let $Q^{n-2}$ denote the associated $n-2$-dimensional projective quadric. 
It is equipped with an action of $G:=\SO_p\times \SO_q\subset \SO_n$ with four orbits, easily described in terms of the rational projections from $\bbP^{n-1}$ to $\bbP^{p-1}$ and $\bbP^{q-1}$. 
There is an open dense orbit, formed by the points at which both projections are defined and no projection lie in the corresponding quadrics, the codimension one orbit formed by the points at which both projections are defined and lie in the corresponding quadrics, and two closed orbits formed by the points where one of the projections is undefined. 

We will be interested in the blow-up $\blQQ{n}{p}$ of $Q^{n-2}$ along one of the closed orbits, say the quadric $Q^{p-2}$ in $\bbP^{p-1}$. 

The group of automorphisms of $Q^{n-2}$ is up to isogeny the group $\SO_n$. 
The connected group of automorphisms of $X$ is thus isogenous to the neutral component of the stabilizer of the quadric in $\bbP^{p-1}$ inside $\SO_n$. 
It is isogenous to $\SO_{p}\times \SO_{q}$. 

\subsection{Combinatorial data associated to the manifold}

\subsubsection{The open orbit}
The quadric and its blow-up, equipped with the action of $G$, are symmetric varieties. 
The open dense orbit is the symmetric space 
$\SO_p\times \SO_q / H$ where $H$ is the stabilizer of say $[1:0\cdots :0:1:0\cdots:0]$. 
It is the unique proper subgroup sandwiched between 
$\{1\}\times \SO_{p-1}\times \{1\} \times \SO_{q-1}$ 
and $S(\GO_{1}\times \GO_{p-1})\times S(\GO_1\times \GO_{q-1})$. 
In particular, it is not, properly speaking a product of symmetric spaces even though its restricted root system does decompose as $A_1\times A_1$. 
Let $\alpha$ and $\beta$ denote the simple restricted roots corresponding the first and second factor of $G$. 
We denote by $\alpha^{\vee}$ and $\beta^{\vee}$ the corresponding simple restricted coroots.
Note that the multiplicity of the restricted root $\alpha$ is $p-2$ and the multiplicity of $\beta$ is $q-2$.

Given the restricted root system, it is easy to determine the combinatorial data associated to the spherical homogeneous space $G/H$.  
The \emph{spherical lattice} $\mathcal{M}$ is the unique proper lattice sandwiched between the restricted weight lattice and the restricted root lattice for the restricted root system of type $A_1\times A_1$. 
It is thus generated by $\alpha$ and $\frac{1}{2}(\alpha+\beta)$. 
The \emph{valuation cone $\mathcal{V}$} is the negative restricted Weyl chamber. 
The \emph{colors} are in bijection with simple restricted roots and the image of the \emph{color map} is the set of simple restricted coroots.  
The description of colors in the last sentence is not immediate as it is not immediate that the color map is injective. 
However, it follows for example from the description of the Picard group which will follow in the next section, as the quadric is a Picard rank one manifold.  

\subsubsection{Colored fans and embeddings}
The colored fan of the quadric consists of the following colored cones:
\begin{itemize}
\item the point $(\{0\},\emptyset)$, for the open orbit,
\item the ray $(-\bbR_+ (\alpha^{\vee}+\beta^{\vee}),\emptyset)$ for the codimension one orbit,
\item $(\Cone(\alpha^{\vee},-(\alpha^{\vee}+\beta^{\vee})),\{\alpha^{\vee}\})$ for the subquadric $\text{Q}^{q-2}$,
\item and $(\Cone(\beta^{\vee},-(\alpha^{\vee}+\beta^{\vee})),\{\beta^{\vee}\})$ for the subquadric $\text{Q}^{p-2}$.
\end{itemize}

To obtain the colored fan of the blow up $\blQQ{n}{p}$ of the quadric along $\text{Q}^{p-2}$, one needs only replace the last cone by $(\Cone(-\alpha^{\vee},-(\alpha^{\vee}+\beta^{\vee})),\emptyset)$ and add the ray $(-\bbR_+ \alpha^{\vee}, \emptyset)$ (see Figure~\ref{fig_colfan}).

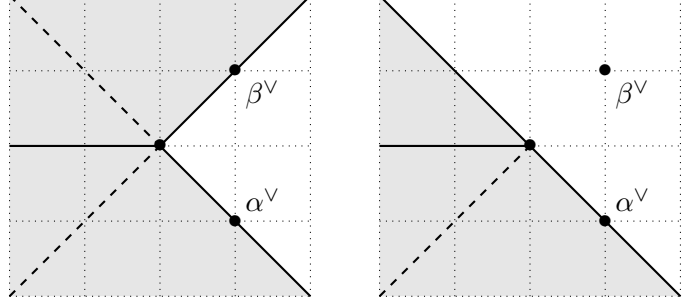
\begin{figure}
\centering
\caption{Colored fans of the quadric and its blow-up}
\label{fig_colfan}
\begin{tikzpicture}  
\fill[color=gray!20] (-2,0) -- (0,0) -- (2,2) -- (-2,2) -- cycle;
\fill[color=gray!20] (-2,0) -- (0,0) -- (2,-2) -- (-2,-2) -- cycle;
\draw [dotted] (-2,-2) grid[xstep=1,ystep=1] (2,2); 
\draw (0,0) node{$\bullet$};
\draw (1,1) node{$\bullet$};
\draw (1,1) node[below right]{$\beta^{\vee}$};
\draw (1,-1) node{$\bullet$};
\draw (1,-1) node[above right]{$\alpha^{\vee}$};
\draw [thick] (-2,0) -- (0,0);
\draw [thick] (2,2) -- (0,0);
\draw [thick] (2,-2) -- (0,0);
\draw [thick, dashed] (-2,2) -- (0,0);
\draw [thick, dashed] (-2,-2) -- (0,0);
\end{tikzpicture}
\qquad 
\begin{tikzpicture}  
\fill[color=gray!20] (-2,0) -- (0,0) -- (-2,2) -- cycle;
\fill[color=gray!20] (-2,0) -- (0,0) -- (2,-2) -- (-2,-2) -- cycle;
\draw [dotted] (-2,-2) grid[xstep=1,ystep=1] (2,2); 
\draw (0,0) node{$\bullet$};
\draw (1,1) node{$\bullet$};
\draw (1,1) node[below right]{$\beta^{\vee}$};
\draw (1,-1) node{$\bullet$};
\draw (1,-1) node[above right]{$\alpha^{\vee}$};
\draw [thick] (-2,0) -- (0,0);
\draw [thick] (-2,2) -- (0,0);
\draw [thick] (2,-2) -- (0,0);
\draw [thick, dashed] (-2,-2) -- (0,0);
\end{tikzpicture}
\end{figure}

\subsection{Combinatorial data associated to divisors}

\subsubsection{Picard group, isotropy characters and polytopes}

The Picard group of $\blQQ{n}{p}$ is generated by: the two colors $D_{\alpha}$ and $D_{\beta}$ whose image under the color map are respectively $\alpha^{\vee}$ and $\beta^{\vee}$, and the $G$-invariant divisors $D_{d}$ and $D_e$, whose image in $(\mathcal{M}\otimes\bbR)^*$ are respectively $-\frac{1}{2}(\alpha^{\vee}+\beta^{\vee})$ and $-\alpha^{\vee}$. 
There are two relations, given by $2(D_{\alpha}-D_e)-D_d=0$ and $D_{\alpha}+D_{\beta}-D_e-D_d$. 
Since we are not interested in integrality issues, we write $\bbR$-divisors as combinations of the two $G$-invariant divisors: 
\[ D(a_d,a_e)=a_dD_d+a_eD_e, \]
and note that any such divisor is \emph{special} and has trivial isotropy character. 
The special polytope and moment polytope thus coincide and live in $\mathcal{M}\otimes \bbR$, where they are given by 
\[ \Delta(a_d,a_e)= \left\{ x \alpha+ y \beta \mid 0\leq x \leq \frac{a_e}{2}, \quad 0\leq y, \quad x+y \leq a_d \right\}. \] 

Knowing the restricted root system (with multiplicities), it is easy to determine the anticanonical divisor, which is here $D(\frac{p+q}{2}-1,p-1)$. 
Note also for later use that the half-sum of positive restricted roots with multiplicities is $(\frac{p}{2}-1)\alpha+(\frac{q}{2}-1)\beta$. 

\begin{figure}
\centering
\caption{Moment polytope for $\blQQ{n}{p}$}
\label{fig_polblQQ}
\begin{tikzpicture}
\newcommand*\multun{4}
\newcommand*\multdeux{6}
\newcommand*\size{.6}
\draw [dotted] (-1*\size,{-(1+\multun/2)*\size}) grid[xstep=\size,ystep=\size] ({(1+\multdeux)*\size},{(1+\multdeux)*\size}); 
\draw (0,0) node{$\bullet$}; 
\draw [very thick, ->] (0,0) -- (\size,-\size) node[below left]{$\alpha$}; 
\draw [very thick, ->] (0,0) -- (\size,\size) node[above left]{$\beta$}; 
\draw [very thick] (0,0) -- ({(\multun+1)*\size/2},{-(\multun+1)*\size/2}) -- ({(\multun+\multdeux+2)*\size/2},{(\multdeux-\multun)*\size/2}) -- ({(\multun+\multdeux+2)*\size/2},{(\multun+\multdeux+2)*\size/2}) -- cycle;
\end{tikzpicture}
\end{figure}
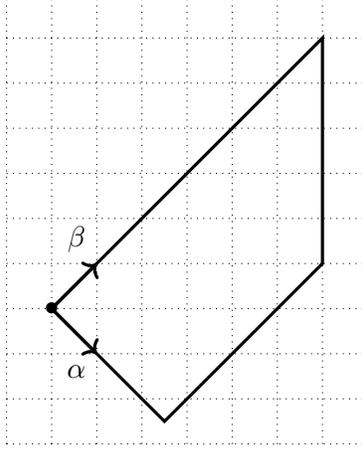

\subsubsection{Duistermaat-Heckman polynomial and barycenters}
Up to a multiplicative constant, the Duistermaat-Heckman polynomial is  
\begin{align*} 
P_{DH}(x\alpha+y\beta) & = \killing{\alpha}{x\alpha+y\beta}^{p-2}\killing{\beta}{x\alpha+y\beta}^{q-2} \\
& = (2x)^{p-2}(2y)^{q-2}
\end{align*}
we set the following notation for the barycenter of moment polytopes with respect to $P_{DH}$ and an arbitrary Lebesgue measure $\dint p$. 
\[ \barDH(a_d,a_e) = \frac{\int_{\Delta(a_d,a_e)}pP_{DH}(p)\dint p}{\int_{\Delta(a_d,a_e)}P_{DH}(p)\dint p} \]

\subsection{Existence of Kähler-Einstein metrics}

The notation $k=p-1$, $l=q-1$ used in the following are introduced for convenience in the proofs. 
With these notations, $p-2=k-1$ is the multiplicity of one of the restricted roots and $q-2=l-1$ is the multiplicity of the other. 
The Duistermaat-Heckman polynomial is thus, up to a constant, $P_{DH}(x\alpha+y\beta)=x^{k-1}y^{l-1}$. 

\subsubsection{Non-existence of Kähler-Einstein metrics}

Set, for $k$, $l$ in $\bbN_{\geq 2}$, 
\begin{equation*} 
I(k,l):= \int_{x=0}^k \int_{y=0}^{k+l-x} (x-(k-1))x^{k-1}y^{l-1} \dint y \dint x. 
\end{equation*}

\begin{lem}
The expression $I(k,l)$ is negative for $k\in \bbN_{>2}$ and $l\in \bbN{\geq 2}$.
\end{lem}

The direct translation in geometrical terms, by \cite[Corollary 1.3]{DH}, justifies the K-unstable members of the family, a major step in proving Theorem~\ref{thm_blQ}:
\begin{cor}
The Fano manifold $\blQQ{n}{p}=\Bl_{Q^{p-2}}(Q^{n-2})$ does not admit a Kähler-Einstein metric if $3<p\leq n-3$. 
\end{cor}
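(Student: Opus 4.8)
The plan is to reduce the double integral to an explicit elementary function of $k$ and $l$ and then show that this function is negative for the stated range. First I would perform the inner integration over $y$, which is immediate:
\[ \int_{y=0}^{k+l-x} y^{l-1}\,\dint y = \frac{(k+l-x)^l}{l}, \]
so that $lI(k,l)=\int_0^k (x-(k-1))x^{k-1}(k+l-x)^l\,\dint x$. The factor $x-(k-1)$ is negative on $[0,k-1)$ and positive on $(k-1,k]$, so the sign is not obvious; this is the crux of the difficulty. My approach would be to find an antiderivative of the integrand. Observing that $\frac{\dint}{\dint x}\bigl[x^k(k+l-x)^l\bigr] = k x^{k-1}(k+l-x)^l - l x^k (k+l-x)^{l-1}$ suggests looking for a primitive of the form $P(x) = x^k (k+l-x)^l$ up to lower-order corrections, or alternatively integrating by parts once to move the derivative onto $(k+l-x)^l$. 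A cleaner route: write $x-(k-1) = \tfrac{1}{k}\bigl(kx - k(k-1)\bigr)$ and split $kx = \bigl(k x - l\cdot 0\bigr)$-type combinations so that the integrand becomes a total derivative plus a manageable remainder; concretely I expect $I(k,l)$ to collapse to a boundary term because the integrand is, up to constants, the derivative of $x^k(k+l-x)^l$ evaluated against the right linear factor. Since $x^k(k+l-x)^l$ vanishes at $x=0$ and equals $k^k l^l$ at $x=k$, I anticipate a closed form like $I(k,l) = \frac{1}{l}\bigl(\text{something}\cdot k^k l^l - \text{lower terms}\bigr)$, possibly after a second integration by parts to handle the residual $\int x^{k-1}(k+l-x)^l\,\dint x$ or $\int x^k(k+l-x)^{l-1}\,\dint x$ term.

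Once an explicit formula is in hand, I would reduce the sign question to an inequality between terms of the form $a^a b^b$, exactly as in the proof of Theorem~\ref{thm_BlPP}: there the key lemma was that $x\mapsto\bigl(\tfrac{x-1}{x+1}\bigr)^x$ is strictly increasing on $[1,\infty[$, proved by a power series expansion of the logarithmic derivative. I expect the present inequality to be of a similar flavor, comparing $(k-1)^{k-1}$ or $(k+1)^{k}$ type quantities against $k^k$, $l^l$, and $(k+l)^{k+l}$ combinations; the monotonicity of a function such as $x\mapsto\bigl(1-\tfrac1x\bigr)^x$ or $x\mapsto(x+c)^x/x^x$ together with elementary estimates ($(1+\tfrac1x)^x < e < (1+\tfrac1x)^{x+1}$) should close the argument. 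The restriction $k>2$ (i.e.\ $p>3$) rather than $k\geq 2$ should appear precisely as the boundary case where the strict inequality degenerates to equality, matching the Kähler--Einstein case $\Bl_{Q^2}$ excluded in the corollary.

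The main obstacle I foresee is the second step: after integration by parts one may be left not with a pure boundary term but with a residual integral $\int_0^k x^{k-1}(k+l-x)^l\,\dint x$ (or its companion), and iterating integration by parts produces a sum of boundary terms that must be resummed into a single clean expression before the $a^a b^b$-comparison becomes transparent. Controlling this sum — equivalently, recognizing the integral as a Beta-type integral on $[0,k]$ with a shifted upper limit — and keeping track of which terms vanish at $x=0$ is where the bookkeeping is delicate. Once the closed form is correct, deriving the sign should be routine calculus along the lines already used for $I(n,p)$ earlier in the paper.
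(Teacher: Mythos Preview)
Your first step (integrating out $y$) matches the paper, but the hoped-for shortcut fails: there is \emph{no} primitive of the form $c\,x^a(k+l-x)^b$ for the integrand $(x-(k-1))x^{k-1}(k+l-x)^l$. Indeed, differentiating $x^a(k+l-x)^b$ produces the linear factor $a(k+l)-(a+b)x$; matching the exponents forces $a=k$, $b=l+1$, and then the root of this linear factor is $\tfrac{k(k+l)}{k+l+1}\neq k-1$. So the analogy with the $\bbP^n$ case, where the integrand happened to be an exact derivative, breaks down here, and no single $a^ab^b$-type comparison drops out.

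What the paper actually does after the $y$-integration is substitute $t=x/k$, expand $(1-t+\tfrac{l}{k})^l$ by the binomial formula, and evaluate each term as a Beta integral. This yields a finite sum $\sum_{j=0}^l A_j$ with
\[
A_j = \binom{l}{j}\frac{l^{l-j}}{k^{l-j+1}}\frac{(k-1)!\,j!}{(k+j+1)!}\bigl(j(1-k)+1\bigr).
\]
The key observation you are missing is a sign argument, not a monotonicity lemma: for $k\geq 3$ the factor $j(1-k)+1$ is negative for every $j\geq 1$, so only $A_0$ is positive. One then checks directly that $A_0+A_1<0$ for $k\geq 4$, and that $A_0+A_1+A_2<0$ for $k=3$; both are one-line polynomial inequalities. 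No resummation into a ``clean expression'' is needed or available.

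Your final remark about $k=2$ is also off: $I(2,l)$ is not zero but strictly positive (this is proved in the next subsection of the paper and is part of the Kähler--Einstein criterion when $p=3$). The transition at $p=3$ is a genuine sign change of $I$, not a degeneration to equality, and the Kähler--Einstein criterion for these symmetric varieties involves a second inequality $J(l)>0$ as well.
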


It is a direct consequence from this and the fact that $G$ is semisimple that there cannot exist any Kähler-Ricci soliton or Mabuchi metrics on these manifolds (the soliton or Mabuchi vector field must commute with the action of $G$ hence is zero here). 
Furthermore, the above Theorem has a more precise interpretation: from \cite[p.~653]{DelKSSV} we deduce that $\blQQ{n}{p}$ is not K-semistable, or similarly from \cite[Corollary~1.3]{DH} we deduce that the greatest Ricci lower bound $R(\blQQ{n}{p})$ is strictly less than $1$. 

\begin{proof}
We first derive an explicit formula for the integral. 
Integration in the $y$ variable yields 
\begin{equation*}
I(k,l)= \int_0^k (x^k-(k-1)x^{k-1})\frac{(k+l-x)^l}{l} \dint x
\end{equation*}
Using the change of variables $t=\frac{x}{k}$, we have 
\begin{equation*}
I(k,l)= \frac{k^{k+l+1}}{l} \int_0^1 \left( t^k-\frac{k-1}{k}t^{k-1} \right) \left( 1-t+\frac{l}{k} \right)^l \dint t 
\end{equation*}
By the binomial formula applied to $\left( 1-t+\frac{l}{k} \right)^l$, we write this as a sum 
\begin{equation*}
\frac{k^{k+l+1}}{l} \sum_{j=0}^l {l \choose j} \frac{l^{l-j}}{k^{l-j}} \left( B(k+1,j+1)-\frac{k-1}{k}B(k,j+1) \right)
\end{equation*}
where $B$ denotes the \emph{beta function}, defined by 
\begin{align*}
B(a,b)& =\int_0^1 t^{a-1}(1-t)^{b-1}\dint t \\
& = \frac{(a-1)!(b-1)!}{(a+b-1)!} \quad \text{ at positive integers } a, b
\end{align*}

Let $A_j$ denote the $j$-th summand, so that $lk^{-(k+l+1)}I(k,l)=\sum_{j=0}^lA_j$.
We simplify a bit the expression of the summands:
\begin{align*}
A_j & = {l \choose j} \frac{l^{l-j}}{k^{l-j}} \left( \frac{k!j!}{(k+j+1)!}-\frac{k-1}{k}\frac{(k-1)!j!}{(k+j)!} \right) \\
& = {l \choose j} \frac{l^{l-j}}{k^{l-j+1}} \frac{(k-1)!j!}{(k+j+1)!}\left( k^2-(k-1)(k+j+1) \right) \\
& = {l \choose j} \frac{l^{l-j}}{k^{l-j+1}} \frac{(k-1)!j!}{(k+j+1)!}(j(1-k)+1)
\end{align*}
The major advantage of this expression (over the numerous different ways to compute the integral) is that for $k\geq 3$, all but the first term are negative. 
It thus suffices to compensate the positive first term with some of the negative terms to conclude. 

For most cases, it is enough to consider the first two summands:
\begin{align*}
A_0+A_1 & = \frac{l^l}{k^{l+1}}\frac{1}{k(k+1)} + l\frac{l^{l-1}}{k^{l}}\frac{2-k}{k(k+1)(k+2)} \\
& = \frac{l^l}{k^{l+2}(k+1)(k+2)}\left(-k^2+3k+2 \right)
\end{align*}
The polynomial $-x^2+3x+2$ is negative for $x>\frac{3+\sqrt{17}}{2}$, hence $A_0+A_1$ is negative for $k\geq 4$. 

For the case $k=3$, we consider the additional summand $A_2$. We compute 
\begin{align*}
A_0+A_1+A_2=\frac{l^{l-1}}{36\cdot 3^l}(-5l+9)
\end{align*}
and this is negative for $l>\frac{9}{5}$ hence in particular for $l\geq 2$. 
\end{proof}

\subsubsection{Existence of Kähler-Einstein metrics}

\begin{lem}
We have, for all $l\in \bbN_{\geq 2}$,  
\begin{align*}
J(l):=\int_{x=0}^2\int_{y=0}^{2+l-x} x(y-(l-1))y^{l-1} \dint y \dint x & >0 \\
I(l):= \int_{x=0}^2\int_{y=0}^{2+l-x} x(x-1)y^{l-1} \dint y \dint x & >0 
\end{align*}
\end{lem}

The geometrical translation of the above statement finishes the proof of Theorem~\ref{thm_blQ} in the cases $3\leq p\leq n-3$ by dealing with the case \(p=3\). 

\begin{proof}   
Let us start with the first inequality. We compute \begin{align*}
J(l) & = \int_0^2 x\left( \frac{(2+l-x)^{l+1}}{l+1}-(l-1)\frac{(2+l-x)^{l}}{l} \right) \dint x \\
& = \int_l^{l+2} \frac{t^l}{l(l+1)}\left(-lt^2+(2l^2+2l-1)t-(l+2)(l^2-1) \right) \dint t \\
& = \frac{(l+2)^{l+1}(4l^2+7l-2)-l^{l+1}(-4l^2+l+12)}{l(l+1)(l+2)(l+3)} 
\end{align*}
Note that, for $l\geq 2$, $4l^2+7l-2$ is positive, and $(l+2)^{l+1}>l^{l+1}$, so we have 
\begin{align*}
l^{-l}(l+1)(l+2)(l+3)J(l) & >(4l^2+7l-2)-(-4l^2+l+12) \\
& > 8l^2+6l-14.
\end{align*}
It is easy to check that the latter polynomial is positive for $l>1$ hence \emph{a~fortiori} for $l\geq 2$.

For the second inequality, we compute using the same method 
\begin{align*}
I(l) & = \frac{(l+2)^{l+2}-(7l+12)l^{l+1}}{(l+2)(l+3)} \\
& = \frac{(l+2)^{l+2}}{(l+2)(l+3)}\left( 1-\left( 1-\frac{2}{l+2} \right)^{l+2}\frac{7l+12}{l} \right)
\end{align*}
By a classical inequality, $\left(1-\frac{2}{l+2} \right)^{l+2}<e^{-2}$, hence it is enough to have 
\begin{equation*} 
7l+12<le^2.
\end{equation*}
The inequality above is satisfied as long as $l\geq 31$ and the statement is proved in these cases. 
It remains to check a finite number of cases (for $2\leq l\leq 30$). 
We omit the details as it is tedious and without difficulty. 
\end{proof}

\section{Remaining cases of blow-ups of quadrics along a linear subquadric}
\label{sec_BlQred}

\subsection{Description of the manifolds}

In this last section, we will be interested in the blow-ups of $Q^{n-2}$ along $Q^{n-4}$, the blow-up along $Q^0$, and the blow up along a single point (which is one half of $Q^0$). In particular, in the notations of the previous section, we deal with the cases \(p=2\) and \(p=n-2\).  

We hence consider the case when the group acting on the quadric is $G=\SO_{n-2}\times\SO_2$ (and we assume $5\leq n$, the case of the blow up of $\bbP^1\times \bbP^1$ at one or two points being well-known). 
Since $\SO_2 = \bbC^*\), the group $G$ is not semisimple. 
Furthermore, there are more orbits in the quadric:
the zero-dimensional quadric $Q^0$ consists of \emph{two} orbits of $G$, 
there is still the closed orbit $Q^{n-4}$, 
there are \emph{two} codimension one orbits, formed by points whose projections belong to $Q^{n-4}$ and one point in $Q^0$, 
and finally the complement of all these lower dimensional orbits is the open dense orbit.

\subsection{Combinatorial data associated to the manifolds}

\subsubsection{The open orbit}

The open dense orbit is a (reductive) symmetric space of type $A_1$ and rank two. 
The simple root is denoted, consistently with the previous section, by $\alpha$.
Its multiplicity is $p-2=n-4$. 
The spherical lattice $\mathcal{M}$ is generated by some element $\varpi \in \mathfrak{X}(T/(T\cap [G,G]))$ and $\frac{\varpi+\alpha}{2}$ 
The valuation cone $\mathcal{V}$ is still the negative restricted Weyl chamber, defined here by $\alpha\leq 0$ in $(\mathcal{M}\otimes\bbR)^*$.
There is a single color $D_{\alpha}$ which is sent to $\alpha^{\vee}$ by the color map. 
Let $f$ be the element of $(\mathcal{M}\otimes\bbR)^*$ defined by $\alpha(f)=0$ and $\varpi(f)=2$, so that $f$ and $\frac{f+\alpha^{\vee}}{2}$ generate the lattice dual to $\mathcal{M}$. 

\subsubsection{Colored fans and embeddings}
The colored fan of the quadric is formed by the following cones:
\begin{itemize}
\item $(\{0\},\emptyset)$ corresponding to the open dense orbit,
\item the ray $(\bbR_+ (f-\alpha^{\vee}),\emptyset)$ corresponding to a codimension one orbit whose closure we denote by $D_+$,
\item the ray $(-\bbR_+ (f+\alpha^{\vee}),\emptyset)$ corresponding to a codimension one orbit whose closure we denote by $D_-$,
\item the colorless cone $(\Cone(f-\alpha^{\vee},-f-\alpha^{\vee}),\emptyset)$ corresponding to $Q^{n-4}$
\item the colored cone $(\Cone(\alpha^{\vee},f-\alpha^{\vee}),\{\alpha^{\vee}\})$ corresponding to one point in $Q^0$,
\item and the colored cone $(\Cone(\alpha^{\vee},-f-\alpha^{\vee}),\{\alpha^{\vee}\})$ corresponding to the other point in $Q^0$.
\end{itemize}
To obtain the colored fans of the blow-ups, it suffices to make the obvious modifications while adding:
\begin{itemize}
\item the ray $(\pm\bbR_+ f,\emptyset)$ for the blow-up at one point of $Q^0$, or both for the blow-up along the full $Q^0$, we denote by $E_{\pm}$ the corresponding divisors,
\item the ray $(-\bbR_+ \alpha^{\vee},\emptyset)$ for the blow-up at $Q^{n-4}$, we denote by $E$ the corresponding divisor. 
\end{itemize}

The images of the above divisors in $(\mathcal{M}\otimes \bbR)^*$ are as illustrated in Figure~\ref{fig_colorsQ}.
\begin{figure}
\centering
\caption{Images of divisors in $(\mathcal{M}\otimes \bbR)^*$}
\label{fig_colorsQ}
\begin{tikzpicture}  
\draw [dotted] (-2,-2) grid[xstep=1,ystep=1] (2,2); 
\draw [thick, ->] (0,0) -- (1,-1);
\draw (.7,-.7) node[left]{$\alpha^{\vee}$};
\draw [thick, ->] (0,0) -- (1,1);
\draw (.5,.5) node[right]{$f$};
\draw (1,1) node{$\bullet$};
\draw (1,1) node[above right]{$E_+$};
\draw (-1,-1) node{$\bullet$};
\draw (-1,-1) node[below left]{$E_-$};
\draw (0,1) node{$\bullet$};
\draw (0,1) node[above]{$D_+$};
\draw (-1,0) node{$\bullet$};
\draw (-1,0) node[left]{$D_-$};
\draw (1,-1) node{$\bullet$};
\draw (1,-1) node[below right]{$D_{\alpha}$};
\draw (-1,1) node{$\bullet$};
\draw (-1,1) node[above left]{$E$};
\end{tikzpicture}
\end{figure}
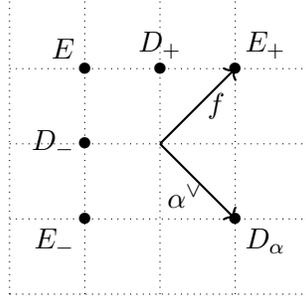

\subsection{Combinatorial data associated to divisors}

\subsubsection{Picard group, isotropy characters and polytopes:}
The Picard group of the blow-up of $Q^{n-2}$ along $Q^{n-4}$ is generated by $D_+$, $D_-$, $D_{\alpha}$, and $E$, with the relations $D_-=D_+$ and $D_++D_-+2E-2D_{\alpha}=0$. 
It is convenient to represent the $\bbR$-divisors as combinations 
\[ D_e(a,a_e)=a(D_++D_-)+a_eE \]
with $a$, $a_e\in \bbR$. 
They are special divisors, with trivial isotropy characters. 
The corresponding special (or moment) polytope is 
\[ \Delta_e(a,a_e)=\{ x\alpha+y\varpi \mid 0\leq x\leq \frac{a_e}{2}, \quad x-a\leq y\leq a-x  \}. \]
The divisor $D_e(a,a_e)$ is ample if and only if $0<a_e<2a$.
The anticanonical divisor is $D_e(\frac{n}{2}-1,n-3)$. 
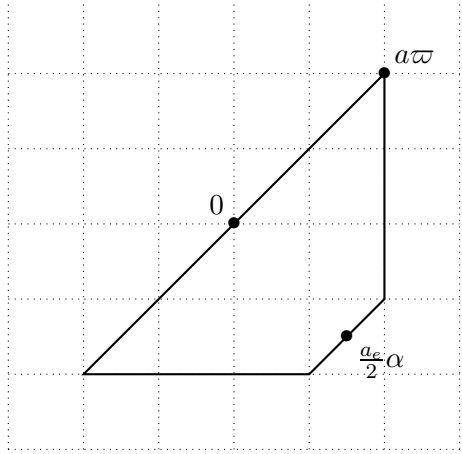
\begin{figure}
\centering
\caption{The polytope $\Delta_e(a,a_e)$}
\label{fig_polQn4}
\begin{tikzpicture}  
\draw [dotted] (-3,-3) grid[xstep=1,ystep=1] (3,3);
\draw (0,0) node{$\bullet$};
\draw (0,0) node[above left]{$0$};
\draw [thick] (-2,-2) -- (1,-2) -- (2,-1) -- (2,2) -- cycle;
\draw (1.5,-1.5) node{$\bullet$};
\draw (1.5,-1.5) node[below right]{$\frac{a_e}{2}\alpha$};
\draw (2,2) node{$\bullet$};
\draw (2,2) node[above right]{$a\varpi$};
\end{tikzpicture}
\end{figure}

We now consider the blow-up of $Q^{n-2}$ along one point of $Q^0$, say the one whose exceptional divisor we denoted by $E_+$. 
The relations in the Picard group are $2E_++D_+-D_-=0$ and $2D_{\alpha_1}-D_+-D_-=0$.
We may then write any $\bbR$-divisor as combinations 
\[ D_+(a,a_+)=a(D_++D_-)+a_+E_+ \]
with $a$, $a_+\in \bbR$. 
They are special divisors, with trivial isotropy characters. 
The corresponding special (or moment) polytope is 
\[ \Delta_+(a,a_+)=\left\{ x\alpha+y\varpi \mid 0\leq x, \quad \frac{-a_+}{2} \leq y, \quad x-a\leq y\leq a-x  \right\}. \]
The divisor $D_+(a,a_+)$ is ample if and only if $0<a_+<2a$.
The anticanonical divisor is $D_+(\frac{n}{2}-1,1)$.
\begin{figure}
\centering
\caption{The polytope $\Delta_+(a,a_+)$}
\label{fig_polBlQpt}
\begin{tikzpicture}  
\draw [dotted] (-3,-3) grid[xstep=1,ystep=1] (3,3);
\draw (0,0) node{$\bullet$};
\draw (0,0) node[above left]{$0$};
\draw [thick] (-1,-1) -- (0,-2) -- (2,-2) -- (2,2) -- cycle;
\draw (2,-2) node{$\bullet$};
\draw (2,-2) node[below right]{$a\alpha$};
\draw (-1,-1) node{$\bullet$};
\draw (-1,-1) node[below left]{$-\frac{a_+}{2}\varpi$};
\end{tikzpicture}
\end{figure}
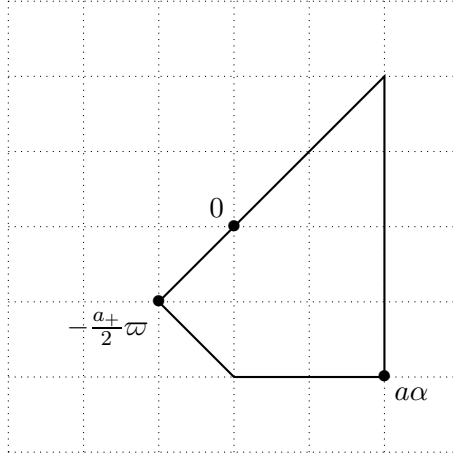

We now consider the blow-up of $Q^{n-2}$ along both points of $Q^0$.
The relations are now $2(E_+-E_-)+D_+-D_-=0$ and $2D_{\alpha_1}-(D_++D_-)=0$.
We may then write any $\bbR$-divisor as combinations 
\[ D_{\pm}(a,a_+,a_-)=a(D_++D_-)+a_+E_+ +a_-E_- \]
with $a$, $a_+$, $a_-\in \bbR$. 
They are special divisors, with trivial isotropy characters. 
The corresponding special (or moment) polytope is 
\[ \Delta_{\pm}(a,a_+,a_-)=\left\{ x\alpha+y\varpi \mid 0\leq x, \quad \frac{-a_+}{2} \leq y \leq \frac{a_-}{2}, \quad x-a\leq y\leq a-x  \right\}. \]
The divisor $D_{\pm}(a,a_+,a_-)$ is ample if and only if $0<a_+<2a$ and $0<a_-<2a$.
The anticanonical divisor is $D_{\pm}(\frac{n}{2}-1,1,1)$.
\begin{figure}
\centering
\caption{The polytope $\Delta_{\pm}(a,a_+,a_-)$}
\label{fig_polBlQ2pt}
\begin{tikzpicture}  
\draw [dotted] (-3,-3) grid[xstep=1,ystep=1] (3,3);
\draw (0,0) node{$\bullet$};
\draw (0,0) node[above left]{$0$};
\draw [thick] (-1,-1) -- (0,-2) -- (2,-2) -- (2,1) -- (1.5,1.5) -- cycle;
\draw (2,-2) node{$\bullet$};
\draw (2,-2) node[below right]{$a\alpha$};
\draw (-1,-1) node{$\bullet$};
\draw (-1,-1) node[below left]{$-\frac{a_+}{2}\varpi$};
\draw (1.5,1.5) node{$\bullet$};
\draw (1.5,1.5) node[above right]{$\frac{a_-}{2}\varpi$};
\end{tikzpicture}
\end{figure}
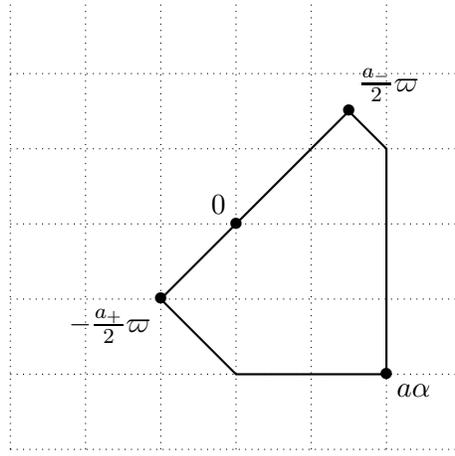

\subsubsection{Duistermaat-Heckman polynomial and barycenters}
Up to a multiplicative constant, the Duistermaat-Heckman polynomial is  
\begin{align*} 
P_{DH}(x\alpha+y\varpi) & = \killing{\alpha}{x\alpha+y\varpi}^{n-4} \\
& = (2x)^{n-4}
\end{align*}
The half sum of restricted roots is $(\frac{n}{2}-2)\alpha_1$

We set the following notations for the barycenters of the several blow-ups of $Q^{n-2}$:
\begin{align*}
\barDH_e(a,a_e)& =\frac{\int_{\Delta_e(a,a_e)}pP_{DH}(p)\dint p}{\int_{\Delta_e(a,a_e)}P_{DH}(p)\dint p} \\
\barDH_+(a,a_+)& =\frac{\int_{\Delta_+(a,a_+)}pP_{DH}(p)\dint p}{\int_{\Delta_+(a,a_+)}P_{DH}(p)\dint p} \\
\barDH_{\pm}(a,a_+,a_-)& =\frac{\int_{\Delta_{\pm}(a,a_+,a_-)}pP_{DH}(p)\dint p}{\int_{\Delta_{\pm}(a,a_+,a_-)}P_{DH}(p)\dint p} \\
\end{align*}

\subsection{Existence of Kähler-Einstein metrics for blow-ups along a full quadric}

We will now finish the proof of Theorem~\ref{thm_blQ}.

\subsubsection{Case of $\Bl_{Q^{n-4}}Q^{n-2}$}

Set 
\begin{align*}
I_V & = \int_{x=0}^{n-3}\int_{y=x-n+2}^{n-2-x}x^{n-4} \dint y \dint x \\
I_{\alpha} & = \int_{x=0}^{n-3}\int_{y=x-n+2}^{n-2-x}x^{n-3} \dint y \dint x \\
I_{\varpi} & = \int_{x=0}^{n-3}\int_{y=x-n+2}^{n-2-x}yx^{n-4} \dint y \dint x 
\end{align*}
then $\Bl_{Q^{n-4}}(Q^{n-2})$ is Kähler-Einstein if and only if 
\[ I_{\varpi}=0 \qquad \text{and} \qquad \frac{I_{\alpha}}{I_V}>n-4. \] 
The vanishing $I_{\varpi}=0$ is obvious by symmetry with respect to the reflection leaving $\bbR\varpi$ invariant, while a straightforward computation yields 
\[ \frac{I_{\alpha}}{I_V}= \frac{2(n-3)^2(n-2)}{(n-1)(2n-5)}. \]
Then $\frac{I_{\alpha}}{I_V}>n-4$ is equivalent to $7n^2-11n+2>0$ which is true for $n>\frac{11+\sqrt{65}}{14}$ hence certainly for all $n\geq 5$.
So we have proved that $\Bl_{Q^{n-4}}Q^{n-2}$ is Kähler-Einstein for all $n\geq 5$.

\subsubsection{Case of $\Bl_{Q^{0}}Q^{n-2}$}

Here there is an obvious symmetry of the polytope yielding the vanishing of the Futaki character, hence no obstruction in the direction of $\varpi$. 
It remains to check the inequality
\[ \frac{I_{\alpha}}{I_{V}}-(n-4)>0 \] 
where 
\begin{align*}
I_V & = 2\int_{y=0}^{1}\int_{x=0}^{n-2-y}x^{n-4} \dint x \dint y \\
I_{\alpha} & = 2\int_{y=0}^{1}\int_{x=0}^{n-2-y}x^{n-3} \dint x \dint y. 
\end{align*}
We have, by computation of the integrals,  
\begin{align*} 
\frac{I_{\alpha}}{I_{V}}& = \frac{(n-3)\left((n-2)^{n-1}-(n-3)^{n-1}\right)}{(n-1)\left((n-2)^{n-2}-(n-3)^{n-2}\right)} \\
\end{align*}

Using the fact that 
\[ \left((n-2)^{n-1}-(n-3)^{n-1}\right)>(n-2)\left((n-2)^{n-2}-(n-3)^{n-2}\right)\] 
the previous inequality is implied by 
\[ (n-3)(n-2)-(n-4)(n-1)>0 \]
which is true (recall that we can assume $n\geq 5$ throughout). 
We have proved that $\Bl_{Q^{0}}Q^{n-2}$ is Kähler-Einstein.

\subsection{There are no Mabuchi metrics on $\Bl_{\text{pt}}Q^{n-2}$}

We finally consider the blow-up of $Q^{n-2}$ at a point. 
There is an obvious lack of symmetry in the polytope (even with respect to the Duistermaat-Heckman measure) which ensures the the Futaki character will not vanish. 
Hence there are no Kähler-Einstein metrics on $\Bl_{\text{pt}}Q^{n-2}$. 
An even shorter proof is that the automorphism group is not reductive. 
We prove in fact a stronger result: 
\begin{thm}
For $n\geq 5$, the blow-up of $Q^{n-2}$ at one point does not admit Mabuchi metrics. 
\end{thm}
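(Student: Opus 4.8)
The plan is to apply the Mabuchi criterion of \cite{DH}, exactly as for $\blPPP{n}{p}$ in Section~\ref{sec_BlP}, now exploiting the asymmetry of the polytope $\Delta_+(\tfrac n2-1,1)$. Since the semisimple part $\SO_{n-2}$ of the acting group is simple, any holomorphic vector field entering a multiplier Hermitian structure on $\Bl_{\text{pt}}Q^{n-2}$ must commute with $\SO_{n-2}$, hence lie in the one-dimensional central direction $\bbR\varpi$. A Mabuchi structure therefore corresponds to an affine function $\ell(y)=Ay+B$ of the $\varpi$-coordinate $y$, strictly positive on the moment polytope $\Delta_+(\tfrac n2-1,1)$ of $-K_X$, and by \cite{DH} one exists if and only if the barycenter of this polytope with respect to $\ell\cdot P_{DH}$ lies in the appropriate translate of the dual of the valuation cone. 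As the half-sum of restricted roots is a multiple of $\alpha$ and the dual valuation cone is carried by $\alpha$, this forces in particular the vanishing of the $\varpi$-component:
\[ \int_{\Delta_+(\frac n2-1,1)} y\,\ell(y)\,x^{n-4}\,\dint x\,\dint y = 0 . \]

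Writing $a=\tfrac n2-1$ and integrating out $x$ — the slice of $\Delta_+(a,1)$ at height $y$ is $0\le x\le a+y$ for $y\in[-\tfrac12,0]$ and $0\le x\le a-y$ for $y\in[0,a]$ — this becomes $A\,m_2+B\,m_1=0$ with
\[ m_i:=\int_{-1/2}^{0}y^i(a+y)^{n-3}\,\dint y+\int_{0}^{a}y^i(a-y)^{n-3}\,\dint y\qquad(i=1,2). \]
I would first compute $m_1$ from the explicit antiderivative of $y(a\pm y)^{n-3}$; the boundary terms at $y=a$ cancel, leaving the clean value $m_1=\tfrac1{n-1}\bigl(\tfrac{n-3}{2}\bigr)^{n-2}>0$. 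In particular $A=0$ forces $B=0$, re-proving the absence of a Kähler–Einstein metric. Since $m_1>0$ and $m_2>0$, every solution with $A\neq 0$ is $\ell(y)=A\bigl(y-\tfrac{m_2}{m_1}\bigr)$, which vanishes at $y_0=\tfrac{m_2}{m_1}>0>-\tfrac12$. Hence it suffices to prove $m_2\le a\,m_1$: then $y_0$ lies in $[-\tfrac12,a]$, which is precisely the range of the $\varpi$-coordinate on $\Delta_+(a,1)$, so $\ell$ changes sign on the polytope and cannot be taken positive there, and no Mabuchi metric can exist.

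The inequality $m_2<a\,m_1$ is where the work lies, and I expect it to be the main obstacle. The naive bound $(a+y)^{n-3}\le a^{n-3}$ on $[-\tfrac12,0]$ is too wasteful: since $(n-3)/a\to 2$ it overshoots by a factor close to $e$, and makes $a\,m_1-m_2$ come out possibly negative, so one must keep the exact closed form of $m_2$, again obtained by integrating the explicit antiderivatives. After the cancellation at $y=a$ the inequality $a\,m_1-m_2>0$ becomes, for all $n\ge 5$, equivalent to
\[ (n-2)\Bigl(1+\tfrac1{n-3}\Bigr)^{n-2}<\tfrac14\bigl(2n^2+n-9\bigr). \]
I would settle this uniformly using the elementary estimate $\bigl(1+\tfrac1m\bigr)^{m+1}<e^{1+1/(2m)}$ for $m\ge 2$ (which follows from $\ln(1+t)<t-\tfrac{t^2}2+\tfrac{t^3}3$): it yields $(n-2)\bigl(1+\tfrac1{n-3}\bigr)^{n-2}<(n-3)e^{3/2}<\tfrac92(n-3)$ for $n\ge 4$, whereupon the claim reduces to $2n^2-17n+45>0$, a quadratic in $n$ with negative discriminant. (Alternatively one verifies the finitely many small values of $n$ by hand and invokes the same asymptotics for the rest, in the spirit of the other proofs in the paper.)
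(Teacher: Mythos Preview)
Your approach is essentially identical to the paper's: both use the criterion of \cite{DH} to reduce to the affine function $\ell(y)=Ay+B$ in the central $\varpi$-direction, observe that the moment condition forces its zero at $y_0=m_2/m_1>0$, and then show $m_2\le a\,m_1$ so that $y_0$ lands inside the $y$-range of the polytope; both arrive at exactly the same closed-form inequality $4(n-2)^{n-1}\le (n-3)^{n-2}(2n^2+n-9)$. The only slip is in your final uniform estimate: the claimed bound $(n-2)\bigl(1+\tfrac{1}{n-3}\bigr)^{n-2}<(n-3)e^{3/2}$ is false at $n=5$ (the left side is $81/8>2e^{3/2}$), so you must invoke your stated alternative of checking $n=5$ by hand --- after which your argument goes through (and the paper's own endgame, using $(1+1/m)^m\le e$ with a typo in the displayed line, is likewise a one-line exponential estimate plus a cubic in $n$).
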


\begin{proof}
By \cite[Corollary 1.4]{DH} and the combinatorial description above, there exists a Mabuchi metric on $\Bl_{\text{pt}}Q^{n-2}$ if and only if we can find a linear function $y\mapsto Ay+B$ with $0\neq A$, $B\in \bbR$, such that the following three conditions are satisfied:
\[ \left(\int_{y=\frac{-1}{2}}^{0}\int_{x=0}^{\frac{n-2}{2}+y} +\int_{y=0}^{\frac{n-2}{2}}\int_{x=0}^{\frac{n-2}{2}-y}\right) y(Ay+B)x^{n-4} \dint x \dint y  =0, \]
\[ \left(\int_{y=\frac{-1}{2}}^{0}\int_{x=0}^{\frac{n-2}{2}+y} +\int_{y=0}^{\frac{n-2}{2}}\int_{x=0}^{\frac{n-2}{2}-y}\right) (x-\frac{n-4}{2})(Ay+B)x^{n-4} \dint x \dint y =0, \]
and 
\[ \frac{-B}{A} < \frac{-1}{2} \quad \text{ or } \quad \frac{-B}{A} >\frac{n-2}{2}. \]
We will show that when the first condition is satisfied, the third condition is violated. 
Assume $A$ and $B$ are such that the first condition is satisfied. 
Then $\frac{-B}{A}$ is equal to 
\[ \frac{\left(\int_{y=\frac{-1}{2}}^{0}\int_{x=0}^{\frac{n-2}{2}+y} +\int_{y=0}^{\frac{n-2}{2}}\int_{x=0}^{\frac{n-2}{2}-y}\right) y^2x^{n-4} \dint x \dint y}{\left(\int_{y=\frac{-1}{2}}^{0}\int_{x=0}^{\frac{n-2}{2}+y} +\int_{y=0}^{\frac{n-2}{2}}\int_{x=0}^{\frac{n-2}{2}-y}\right) yx^{n-4} \dint x \dint y} \] 
The numerator is obviously positive, and the denominator is positive thanks to the lack of symmetry, and comparison with the Kähler-Einstein case of $\Bl_{Q^{0}}Q^{n-2}$. 
In order to show that there are no Mabuchi metrics, it thus suffices to show that 
\[ \left(\int_{y=\frac{-1}{2}}^{0}\int_{x=0}^{\frac{n-2}{2}+y} +\int_{y=0}^{\frac{n-2}{2}}\int_{x=0}^{\frac{n-2}{2}-y}\right)
y(y-\frac{n-2}{2})x^{n-4} \dint x \dint y \leq 0. \]
Integrating, then multiplying by $2^n n(n-1)(n-3)$, this is equivalent to 
\[ 4(n-2)^{n-1}-(n-3)^{n-2}(2n^2+n-9) \leq 0. \]
Since 
\[ \left(\frac{n-2}{n-3}\right)^{n-3}=\left(1-\frac{1}{n-3}\right)^{n-3}\leq e^{-1}, \]
the inequality is implied by 
\[ 0\leq 2n^3-9n^2+4n+11 \]
which certainly holds for $n\geq 5$. 
We have shown that $\Bl_{\text{pt}}Q^{n-2}$ admits no Mabuchi metrics (for $n\geq 5$).
\end{proof}

\bibliographystyle{plain}
\bibliography{infinite}

\begin{thebibliography}{10}

\bibitem{Bla56}
A.~Blanchard.
\newblock Sur les vari\'et\'es analytiques complexes.
\newblock {\em Ann. Sci. Ecole Norm. Sup. (3)}, 73:157--202, 1956.

\bibitem{BSU13}
M.~Brion, P.~Samuel, and V.~Uma.
\newblock {\em Lectures on the structure of algebraic groups and geometric
  applications}, volume~1 of {\em CMI Lecture Series in Mathematics}.
\newblock Hindustan Book Agency, New Delhi; Chennai Mathematical Institute
  (CMI), Chennai, 2013.

\bibitem{Bri89}
Michel {Brion}.
\newblock Groupe de {P}icard et nombres caract\'eristiques des vari\'et\'es
  sph\'eriques.
\newblock {\em Duke Math. J.}, 58(2):397--424, 1989.

\bibitem{DelTh}
Thibaut Delcroix.
\newblock {\em Métriques de {K}ähler-{E}instein sur les compactifications de
  groupes}.
\newblock Theses, {Universit{\'e} Grenoble Alpes}, October 2015.

\bibitem{DelKSSV}
Thibaut {Delcroix}.
\newblock {K-Stability of Fano spherical varieties}.
\newblock {\em ArXiv e-prints}, page arXiv:1608.01852, August 2016.

\bibitem{DelKE}
Thibaut Delcroix.
\newblock K\"ahler-{E}instein metrics on group compactifications.
\newblock {\em Geom. Funct. Anal.}, 27(1):78--129, 2017.

\bibitem{DelHoro}
Thibaut {Delcroix}.
\newblock {Kähler geometry of horosymmetric varieties, and application to
  Mabuchi's K-energy functional}.
\newblock {\em ArXiv e-prints}, page arXiv:1712.00221, December 2017.

\bibitem{DH}
Thibaut Delcroix and Jakob Hultgren.
\newblock Coupled complex monge-ampère equations on fano horosymmetric
  manifolds.
\newblock {\em Journal de Mathématiques Pures et Appliquées}, 2020.

\bibitem{Fuj17}
Kento {Fujita}.
\newblock {Examples of K-unstable Fano manifolds with the Picard number 1}.
\newblock {\em {Proc. Edinb. Math. Soc., II. Ser.}}, 60(4):881--891, 2017.

\bibitem{Fut83}
Akito {Futaki}.
\newblock An obstruction to the existence of {E}instein {K}\"ahler metrics.
\newblock {\em Invent. Math.}, 73(3):437--443, 1983.

\bibitem{Hul}
Jakob {Hultgren}.
\newblock {Coupled K\"ahler-Ricci solitons on toric manifolds}.
\newblock arxiv:1711.09881.

\bibitem{HWN17}
Jakob {Hultgren} and David {Witt Nyström}.
\newblock Coupled {K}ähler-einstein metrics.
\newblock {\em Int. Math. Res. Not.}, 2017.

\bibitem{LTZ}
Yan {Li}, Gang {Tian}, and Xiaohua {Zhu}.
\newblock {Singular limits of Kähler-Ricci flow on Fano $G$-manifolds}.
\newblock arxiv:1807.09167.

\bibitem{Mab03}
Toshiki {Mabuchi}.
\newblock Multiplier {H}ermitian structures on {K}ähler manifolds.
\newblock {\em Nagoya Math. J.}, 170:73--115, 2003.

\bibitem{Mat57}
Y.~Matsushima.
\newblock Sur la structure du groupe d'hom\'eomorphismes analytiques d'une
  certaine vari\'et\'e k\"ahl\'erienne.
\newblock {\em Nagoya Math. J.}, 11:145--150, 1957.

\bibitem{Pas08}
Boris {Pasquier}.
\newblock {Vari\'et\'es horosph\'eriques de Fano.}
\newblock {\em {Bull. Soc. Math. Fr.}}, 136(2):195--225, 2008.

\bibitem{PS10}
Fabio {Podest{\`a}} and Andrea {Spiro}.
\newblock K\"ahler-{R}icci solitons on homogeneous toric bundles.
\newblock {\em J. Reine Angew. Math.}, 642:109--127, 2010.

\bibitem{WZ04}
Xu-Jia {Wang} and Xiaohua {Zhu}.
\newblock K\"ahler-{R}icci solitons on toric manifolds with positive first
  {C}hern class.
\newblock {\em Adv. Math.}, 188(1):87--103, 2004.

\end{thebibliography}

\end{document}